\newcommand{\eps}{\varepsilon}
\newtheorem{theorem}{Theorem}
\newtheorem{lemma}[theorem]{Lemma}
\newtheorem{remark}[theorem]{Remark}
\newtheorem{definition}[theorem]{Definition}
\newtheorem{corollary}[theorem]{Corollary}
\newtheorem{claim}[theorem]{Claim}
\title{On the relation of separability, bandwidth and embedding}
\author{
	B\'ela Csaba\thanks{Bolyai Institute, University of Szeged, Hungary, {email: bcsaba@math.u-szeged.hu}. Partially supported by NKFIH Fund No.~KH 129597 and by NKFIH Fund No.~SNN 117879.},
	B\'alint V\'as\'arhelyi\thanks{Bolyai Institute, University of Szeged, Hungary, email: mesti@math.u-szeged.hu} 
}
\begin{document}

\maketitle	

\begin{abstract}
In this paper we construct a class of bounded degree bipartite graphs with a small separator and large bandwidth.
Furthermore, we also prove that graphs from this class are spanning subgraphs of graphs with minimum degree just slightly larger than $n/2$. 
\end{abstract}

\section{Introduction} 

One of the most basic questions in graph theory is, given graphs $H$ and $G,$ to decide whether $H$ is a subgraph of $G.$ 
If so, we also say that $H$ can be {\it embedded} into $G,$ and call $G$ the {\it host graph.}




In this paper we construct a class of bounded degree  bipartite graphs that have small separator and large bandwidth, and prove that the graphs of this
class are spanning subgraphs of $n$-vertex graphs that have minimum degree just slightly larger than $n/2.$ We also show that using earlier methods these 
graphs cannot be embedded in general into host graphs with such small minimum degree.

The structure of the paper is as follows. First we give an brief overview of the area, finishing with a list of our new results. Next we define an infinite class of bounded degree graphs having small separators and very large bandwidth. Finally, we embed such graphs using the Regularity lemma -- Blow-up lemma method.

\section {Overview of the area and our new results}

In this paper we consider only simple graphs. We use standard graph theory notation. In particular, if $F=(V, E)$ is a graph then 
the degree of a vertex $v\in V(F)$ is denoted by $deg_F(v),$ or just $deg(v),$ if $F$ is understood from the context. The number of vertices of $F$
is denoted by $|F|,$ and we let $e(F)=|E(F)|.$ Given $A\subset V(F)$ we let $F[A]$ denote the subgraph of $F$ that is spanned by the vertices in $A.$ If $A, B\subset V(F)$ such that $A\cap B=\emptyset$ then $F[A, B]$ denotes the bipartite subgraph of $F$ which contains precisely the edges with one endpoint in $A$ and the other endpoint in $B.$
Given $S\subset V(F)$ then $N(S)$ denotes the set of those vertices that have at least one neighbor in $S.$ The maximum degree of $F$ is denoted by $\Delta(F),$ the minimum degree of $F$ is denoted by $\delta(F).$

Perhaps the most cited result in extremal graph theory is the celebrated theorem of Dirac, stating that if the minimum degree of a graph on $n\ge 3$
vertices is at least $n/2,$ then the graph contains a Hamilton cycle. This result was generalized in various ways. In a {\it Dirac-type} embedding problem one gives a
lower bound on the minimum degree of the host graph $G,$ and finds conditions for a graph $H$ on the same number of vertices which guarantees that $H\subset G.$  

The famous Bollob\'as-Eldridge-Catlin conjecture~\cite{BE, C} is a Dirac-type question, in which we have a bound for the maximum degree of the graph to be embedded.
It asserts that if $G$ and $H$ are graphs on $n$ vertices, and $\delta(G)\ge (1-1/(\Delta(H)+1))n$ then $H\subset G.$ 
It is still open in general, only some special cases were solved (for large $n$), e.g., the cases $\Delta(H)=2$~\cite{AignerBrandt}, $\Delta(H)=3$~\cite{CsSSz}, and when
$\Delta(H)$ is bounded and $H$ is bipartite~\cite{BEparos}. Kaul, Kostochka and Yu proved an approximation result in~\cite{KKY}. 

One may impose other kind of restrictions on $H$ and obtains still hard problems. For example, one may upper bound the so-called bandwidth of $H,$   this guarantees that $H$ is ``far from being an expander". 
The bandwidth of a graph $H$ is denoted by $bw(H)$ and is defined to be the smallest positive integer $b$ such that there exists a
labelling of the vertices of $V(H)$ by the numbers $\{1,\ldots, n\}$ such that the labels of every pair of adjacent vertices
differ by at most $b.$

Note that a Hamilton path has bandwidth 1, a Hamilton cycle has bandwidth 2. Expander graphs have large, linear bandwidth, a star on $n$ vertices 
has bandwidth $n/2,$ a complete graph has bandwidth $n-1.$ 

One of the most important open problems concerning the bandwidth was a conjecture by Bollob\'as and Koml\'os~\cite{Komlos}. This conjecture was proved by B\"ottcher, Schacht and Taraz~\cite{BST}\footnote{We remark, that the case of $k=2$ was first proved by Abbasi~\cite{Sarmad}, and is also a special case of a result by the author~\cite{wellsep}. Some of the ideas of the latter proof are used in the present paper as well.} using deep tools, in particular the Regularity lemma and the proof of the celebrated P\'osa-Seymour conjecture by Koml\'os, S\'ark\"ozy and Szemer\'edi~\cite{PosaSeymour}.

\begin{theorem}\label{BollKomlos}
For every $D, k\in \mathbb{N}$ and $\varepsilon >0$ there exists $\beta>0$ such
that the following holds. Every $n$-vertex graph $G$ having minimum degree
$\delta(G)\ge (1 - 1/k+\varepsilon)n$ contains all $k$-chromatic $n$-vertex graphs of maximum degree
at most $D$ and bandwidth at most $\beta n$ as subgraphs.
\end{theorem}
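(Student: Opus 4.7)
The plan is to combine the Szemer\'edi Regularity Lemma with the Blow-up Lemma of Koml\'os, S\'ark\"ozy and Szemer\'edi, using the Koml\'os--S\'ark\"ozy--Szemer\'edi proof of the P\'osa--Seymour conjecture to supply the crucial spanning structure in the reduced graph. First I would apply the Regularity Lemma to $G$ with parameters $\eps' \ll d \ll \eps$, obtaining an $\eps'$-regular partition $V_0, V_1, \dots, V_t$ of $V(G)$ with $|V_0| \le \eps' n$ and $|V_1|=\dots=|V_t|$. Form the \emph{reduced graph} $R$ whose vertices are the clusters $V_1, \dots, V_t$ and whose edges correspond to $\eps'$-regular pairs of density at least $d$. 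A standard averaging argument shows that $\delta(R) \ge \bigl(1 - 1/k + \eps/2\bigr) t$, so $R$ inherits the Dirac-type degree condition.

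Second, I would find a suitable spanning structure in $R$. By the P\'osa--Seymour theorem, the bound on $\delta(R)$ is enough to locate the $(k-1)$-st power of a Hamilton cycle in $R$: equivalently, a cyclic ordering $C_1, C_2, \dots, C_t$ of the clusters such that any $k$ consecutive clusters span a $K_k$ in $R$. This gives a chain of $K_k$'s covering every cluster, with consecutive cliques sharing $k-1$ clusters, that will serve as the scaffold for the embedding of $H$.

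Third, I would exploit the bandwidth hypothesis to partition $V(H)$ (using a bandwidth-optimal labelling) into consecutive blocks $B_1, \dots, B_s$ of size close to the cluster size, each admitting a proper $k$-colouring with nearly balanced colour classes, such that edges of $H$ lie within a block or between adjacent blocks. Assign $B_i$ to the clique on $C_i, C_{i+1}, \dots, C_{i+k-1}$, routing its $k$ colour classes into those $k$ clusters in a way that agrees with the assignment on the $k-1$ clusters shared with $B_{i-1}$. Finally, absorb the exceptional vertices of $V_0$ into well-chosen clusters, rebalance cluster sizes so that the number of $H$-vertices targeted at each cluster matches $|V_j|$ exactly, and then apply the Blow-up Lemma to each $K_k$ (or to all of them in parallel) to produce the actual embedding.

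The hardest part is the ``gluing'' between consecutive blocks: vertices of $H$ that straddle $B_i$ and $B_{i+1}$ must be routed to clusters lying in both $K_k$'s, while respecting their colour class on both sides. Arranging the block partition, the colour classes and the cluster sizes so that (i) cluster sizes match exactly, (ii) the shared boundary vertices receive consistent targets across overlapping $K_k$'s, and (iii) the super-regularity hypotheses of the Blow-up Lemma remain intact after all the rebalancing (including the placement of $V_0$-vertices and the pre-embedding of a few connector vertices at each block boundary) is the delicate combinatorial core of the B\"ottcher--Schacht--Taraz argument, and where essentially all the real work lies.
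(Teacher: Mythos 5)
This theorem is not proved in the paper at all: it is the Bandwidth Theorem of B\"ottcher, Schacht and Taraz, quoted from the literature, and the paper only records that its proof uses the Regularity Lemma, the Blow-up Lemma and the P\'osa--Seymour theorem. Your outline matches exactly that strategy (reduced graph inheriting the Dirac-type degree bound, $(k-1)$-st power of a Hamilton cycle as scaffold, bandwidth-based block partition, Blow-up Lemma to finish), so it is consistent with the approach the paper attributes to \cite{BST}; just be aware that, as you yourself note, the balancing and gluing steps you defer constitute essentially the entire technical content of that proof.
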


B\"ottcher~\cite{Btezis} and B\"ottcher et al.~\cite{BPstb} went further and explored relations of bandwidth with other notions, like separability. 
We say that an $n$-vertex graph $H$ is $\gamma$-separable if there exists a separator set $S\subset V(H)$ with $|S|\le \gamma n$ such that
every component of $H-S$ has at most $\gamma n$ vertices. B\"ottcher et al.~\cite{BPstb} observed that bandwidth and separability are closely related: they proved the Sublinear Equivalence Theorem. This states that, roughly speaking, in bounded degree graphs
sublinear bandwidth implies the existence of a sublinear sized separating set and vice versa.  

It is easy to see that there are bounded degree graphs having linearly large bandwidth: it is well-known that a random $l$-regular graph with $l\ge 3$
has large bandwidth with positive probability. However, such random graphs do not have small separators. 
One of our results shows that when the separating set has small (but not very small) linear size, the bandwidth can be very large even for bounded degree graphs.

\begin{theorem}\label{fo1}
Let $r\ge 35$ and $t\ge 1$ be integers and set  $\gamma=\gamma(r)=  1/(8r2^r).$ Then one can construct an infinite class of graphs 
$\mathcal{H}_{r, t}$ such that every element $H$ of $\mathcal{H}_{r, t}$ has a separator set of size $\le \gamma |H|,$ has bandwidth at least $0.3|H|/(2t+4),$ moreover, $\Delta(H)=O(1/\gamma).$
\end{theorem}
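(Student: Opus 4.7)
My plan is to exhibit an explicit infinite family $\mathcal{H}_{r,t}$ of bounded-degree bipartite graphs and verify, separately, the three required conditions. The construction should be centred on a bounded-degree bipartite expander $G_0$ whose role is to force large bandwidth. Concretely, for each $r \ge 35$, $t \ge 1$, and sufficiently large $n$, I would fix a $d$-regular bipartite expander $G_0$ with $d = \Theta(r 2^r) = \Theta(1/\gamma)$ satisfying $bw(G_0) \ge c_0\, |V(G_0)|$ for some absolute constant $c_0 > 0.3$ (such $G_0$ exist, e.g.\ as random $d$-regular bipartite graphs). The graph $H$ is then obtained by augmenting $G_0$ with peripheral structure---attaching short paths or trees to individual vertices of $G_0$, or taking a length-$(2t+4)$ subdivision of its edges---so that a small set of vertices in $H$ can serve as a separator whose removal leaves only small components.

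To verify the \textbf{bandwidth} bound, since $G_0$ (possibly after a contraction) appears as a subgraph or dense minor of $H$, a suitable monotonicity or pullback argument (for subgraphs, directly $bw(H) \ge bw(G_0)$; for an $\ell$-subdivision, $bw(H) \ge (bw(G_0)-1)/\ell$) yields $bw(H) \ge 0.3\, n/(2t+4)$, provided $|V(G_0)|$ and the subdivision length are chosen in the right proportion. The \textbf{max degree} bound $\Delta(H) = O(1/\gamma)$ follows because $G_0$ has degree $O(r 2^r)$ and the peripheral pieces are of bounded degree. For the \textbf{separator}, one explicitly chooses $S \subseteq V(H)$ drawn from peripheral ``hinge'' vertices together with some vertices of $G_0$, and verifies $|S| \le \gamma n$ and that every component of $H - S$ has at most $\gamma n$ vertices by a direct combinatorial counting argument exploiting the peripheral structure.

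The main obstacle is the tension between bandwidth and separability. To get $bw(H) \ge 0.3 n/(2t+4)$, the construction seems to need an expander subgraph on roughly $n/(2t+4)$ vertices; but a naive placement of such an expander as an induced subgraph of $H$ makes the separator condition impossible, since expanders lack sublinear separators, so removing $\gamma n \ll n/(2t+4)$ vertices would leave $G_0$ largely intact in a component of size $\gg \gamma n$. The construction must therefore realize the edges of $G_0$ via peripheral ``dilation,'' so that removing a small set of peripheral vertices severs the paths representing these edges into short fragments, while the dilation parameter stays small enough that the implied bandwidth bound $bw(H) \ge bw(G_0)/\ell$ remains above $0.3\, n/(2t+4)$. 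Carefully tuning these parameters to satisfy all three conditions simultaneously for every $r \ge 35$ and $t \ge 1$---in particular producing genuinely infinitely many such $H$ for each pair $(r,t)$---is where the technical effort of the proof is concentrated.
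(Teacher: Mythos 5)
Your mechanism for the bandwidth lower bound cannot deliver the stated bound, and it is not the mechanism the paper uses. You want to deduce $bw(H)\ge 0.3n/(2t+4)$ from the presence of a linear-bandwidth expander $G_0$ inside $H$, either as a subgraph or as an $\ell$-subdivision via the pullback bound $bw(H)\ge bw(G_0)/\ell$. But the required bound is independent of $r$, while $\gamma=1/(8r2^r)\to 0$. Any expander sitting in $H$ as a subgraph is cut into pieces of size $\le\gamma n$ by the separator $S$ with $|S|\le\gamma n$; vertex expansion then forces $|V(G_0)|=O(\gamma n)$, so subgraph monotonicity yields only $bw(H)\ge c_0|V(G_0)|=O(\gamma n)=O(n/(r2^r))$, far below $0.3n/(2t+4)$ once $r$ is large. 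The subdivision variant fares no better: with $G_0$ $d$-regular on $N$ vertices and dilation $\ell$ one has $n=\Theta(Nd\ell)$, the separator condition forces $d\ell\ge \Omega(1/\gamma)$, and the pullback bound gives $bw(H)\ge c_0N/\ell=\Theta(n/(d\ell^2))=O(\gamma n/\ell)$, again vanishing relative to $n/(2t+4)$ as $r$ grows. No tuning of $N$, $d$, $\ell$ closes this gap; the tension you correctly identify is fatal to this route.

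The paper resolves the tension the other way around: the expander is tiny and \emph{is} the separator. The set $S=S_A\cup S_B$ spans an $(r+1)$-regular bipartite graph on $2k\approx\gamma n$ vertices built from a Ramanujan graph plus a perfect matching between the two copies (the matching guarantees $|N(A)|\ge|A|$). The remaining $(1-\gamma)n$ vertices form $|S|$ small trees, each a path on $t$ vertices whose last vertex carries $D-1\approx 1/\gamma$ leaves, each tree attached to a distinct vertex of $S$ by one edge; this is exactly why $\Delta(H)=\Theta(1/\gamma)$ is forced. The bandwidth bound then comes not from containing a large expander but from a small-distance-between-large-sets argument (compare the star, which has a one-vertex separator yet bandwidth $n/2$): any two disjoint sets of size $0.35n$ each meet at least a third of the tree components on one side, so their attachment points give sets of size $\ge k/3$ in $S_A$ and $S_B$, between which the expander mixing lemma supplies an edge; this produces a pair of vertices, one in each set, at distance $\le 2t+4$ in $H$, hence an edge of ``length'' $\ge 0.3n/(2t+4)$ in any linear ordering. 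That bound is uniform in $r$, which is precisely what your construction cannot achieve.
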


There is a recent interest in embedding graphs with sublinear bandwidth. Staden and Treglown~\cite{ST} embed graphs on $n$ vertices with sublinear bandwidth into locally
dense graphs on $n$ vertices having minimum degree at least $(1/2 + o(1))n.$ Condon, Kim, K\"uhn and Osthus~\cite{CKKO} find an approximate decomposition of a certain class of graphs into graphs that have sublinear bandwidth.

The result of Knox and Treglown~\cite{KT} is particularly interesting for us. They embedded bounded degree graphs with sublinear bandwidth into so called robust expanders. 
Let $0< \nu \le \tau <1.$ Assume that $G$ is a graph of order $n$ and  
$S\subset V(G).$ The $\nu$-robust neighborhood $RN_{\nu, G}(S)$ of $S$ is the set of vertices $v\in V(G)$ such that $|N(v)\cap S|\ge \nu n.$ We say that $G$ is a robust $(\nu, \tau)$-expander
if $|RN_{\nu, G}(S)|\ge |S|+\nu n$ for every $S\subset V(G)$ such  that $\tau n\le |S|\le (1-\tau)n.$ 

We will also show that 
elements of $\mathcal{H}_{r, t}$ (the graph class of Theorem~\ref{fo1}) cannot be embedded into arbitrary robust expanders. 
However, if an $n$-vertex graph $G$ has minimum degree slightly larger than $n/2,$ then it contains the elements of $\mathcal{H}_{r, t}$ as spanning 
subgraphs. We will prove the following. 


\begin{theorem}\label{fo2}
Let $r\ge 35$ and $t\ge 1$ be integers and set  $\gamma=\gamma(r)=  1/(8r2^r).$ Then there exists an $n_0=n_0(\gamma)$ such that the following holds. 
Assume that $n\ge n_0$ and $G$ is an $n$-vertex graph having minimum degree $\delta(G)\ge (1/2+3\gamma^{1/3})n.$ If $H\in \mathcal{H}_{r,t}$ is a graph on 
$n$ vertices, then $H\subset G.$ 
\end{theorem}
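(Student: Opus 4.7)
The plan is to apply the Regularity lemma -- Blow-up lemma approach outlined by the authors, but with the role usually played by a bandwidth-based ordering replaced by one driven by the separator decomposition. First, I would apply Szemer\'edi's Regularity Lemma to $G$ with regularity parameter $\eps \ll \gamma$ and a suitable lower bound on the number of clusters, producing an exceptional set $V_0$ together with clusters $V_1, \dots, V_k$ of common size $L$. Let $R$ be the reduced graph whose edges record the $\eps$-regular pairs of density above a fixed threshold. The standard degree-inheritance argument transfers the condition $\delta(G) \ge (1/2 + 3\gamma^{1/3})n$ to $R$, yielding $\delta(R) \ge (1/2 + 2\gamma^{1/3})k > k/2$. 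By Dirac's theorem $R$ therefore contains a Hamilton cycle $v_1 v_2 \dots v_k$; after moving at most one cluster into $V_0$ I may also assume this cycle has even length, so its clusters can be two-coloured consistently to mirror the bipartite structure of $H$.

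Next, I would exploit the separator structure of $H$. Let $S \subseteq V(H)$ with $|S| \le \gamma n$ be the separator guaranteed by Theorem~\ref{fo1}, and let $C_1, \dots, C_l$ be the components of $H - S$, each of order at most $\gamma n$. Since $H$ is bipartite of bounded degree, so is each $C_j$. I would assign the components to consecutive segments of clusters along the Hamilton cycle so that (i) each cluster ultimately receives exactly $L$ vertices, (ii) the two bipartite classes of every $C_j$ land in clusters of the matching two-colouring, and (iii) whenever a vertex of $H$ is placed in $V_i$, its $H$-neighbours are placed in $V_i$ or in the two clusters adjacent to $V_i$ along the cycle. Point (iii) is the point where the internal structure of the components (from the explicit construction of $\mathcal{H}_{r,t}$) is used: each $C_j$ has order at most $\gamma n$ and bounded degree, so it can be laid out along a short segment of the Hamilton cycle in a BFS-like manner, giving every edge of $C_j$ an endpoint in each of two consecutive clusters.

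Having fixed the assignment, I would upgrade each regular pair used along the cycle to a super-regular pair by shifting a small set of atypical vertices into $V_0$, and then invoke the Blow-up lemma simultaneously on the pairs $(V_i, V_{i+1})$ to embed the portion of $H$ assigned to each of them. The separator vertices $S$, together with the extra vertices that were pushed into $V_0$, are embedded at the end as a correction: each such vertex has only $O(1/\gamma)$ neighbours in $H$, all of whose images have already been placed, and the degree surplus $3\gamma^{1/3} n$ in $G$ guarantees a large common neighbourhood in the leftover cluster capacity, so a Hall-type or greedy matching completes the embedding.

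The main obstacle, in my view, will be the assignment satisfying (i)--(iii): each cluster must end up with exactly $L$ vertices of the prescribed colour, and components that are glued together through separator vertices must land in compatible locations along the cycle. I expect this balancing to be handled via a two-stage argument -- first a greedy distribution producing the correct counts up to a small error, then an augmenting-style rebalancing that uses the minimum-degree slack $2\gamma^{1/3} k$ in $R$ to swap small batches of vertices between clusters. The precise choice $\gamma = 1/(8 r 2^r)$ from Theorem~\ref{fo1} is calibrated so that the bounded degree $O(1/\gamma)$ of $H$ remains small enough compared to the slack $3\gamma^{1/3}$ for this rebalancing, and the subsequent insertion of $S$, to go through.
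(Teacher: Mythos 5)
Your overall Regularity--Blow-up framework matches the paper's in outline, but there is a genuine gap in how you treat the separator set $S$, and it is precisely the point where this theorem differs from earlier bandwidth/separability embedding results. You propose to embed $S$ (together with the exceptional vertices) \emph{last}, by a greedy or Hall-type matching, on the grounds that each vertex of $S$ has $O(1/\gamma)$ neighbours whose images are already placed. But in the construction of $\mathcal{H}_{r,t}$ the set $S$ is not a sparse attachment set: $H[S]$ is the $(r+1)$-regular bipartite Ramanujan-derived graph $F_i$ on up to $\gamma n$ vertices, so each $x\in S$ has $r+1$ of its neighbours \emph{inside} $S$ and only one outside. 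A matching argument can place each vertex of $S$ into a common neighbourhood of previously embedded vertices, but it cannot produce the $\Theta(\gamma n)$ internal edges of $H[S]$. Worse, since the embedding is spanning, after embedding $H-S$ the leftover of $G$ is a forced set of exactly $|S|$ vertices, and $\delta(G)\ge(1/2+3\gamma^{1/3})n$ does not prevent this leftover from lying inside an independent set of $G$ (which may have size up to $(1/2-3\gamma^{1/3})n\gg\gamma n$); then no copy of $F_i$ exists there at all. This is why the paper reverses the order: it embeds $H[S]$ \emph{first}, into all of $G$, using the Fox--Sudakov density theorem (Theorem~\ref{DRCh} with $\rho=1/2$, applicable because $e(G)>\tfrac12\binom{n}{2}$ and $|S|\le\gamma n$ with $\gamma=1/(8r2^r)$ calibrated against $\Delta(F_i)=r+1$), and only afterwards applies the Regularity Lemma to the uncovered part $\widetilde{G}$, handling the already-fixed images of $S$ via a priori restrictions on the first vertices of the tree components in the Blow-up Lemma with restrictions.

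A secondary point: the components of $H-S$ are trees on fewer than $t+D=O(1/\gamma)$ vertices, i.e.\ of constant size, so the paper simply assigns each one to an edge of a (near-)perfect matching of the reduced graph rather than laying components out along a Hamilton cycle; your Hamilton-cycle segment layout is not wrong but is unnecessary. The real balancing work (done in the paper by reassigning first vertices, and then leaves, between adjacent clusters) must in any case respect the constraint that each first vertex lands in a cluster where the image of its unique $S$-neighbour has many neighbours --- a constraint your plan cannot even formulate at the time you run the Blow-up Lemma, since in your ordering those images do not yet exist.
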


A standard example shows that one cannot significantly reduce the minimum degree of $G$ in the above theorem. Let $G$ be the union of two complete graphs on, say,
$n/2+\gamma n/100$ vertices that share $\gamma n/50$ vertices. Clearly, $\delta(G)=n/2+\gamma n/100-1.$ It is an easy exercise to prove\footnote{Of course, one first needs the definition of $\mathcal{H}_{r,t}$ for this proof, which is given in Section~\ref{harmadik} soon.} that if $H\in \mathcal{H}_{r,t}$ and $|H|=|G|=n,$ then $H\not\subset G.$ We leave this proof for the reader. 

The proof of Theorem~\ref{fo2} will rely heavily on the proof method of~\cite{wellsep} and an important result of Fox and Sudakov~\cite{FS}. Let us remark that in~\cite{wellsep} 
the size of the separator set was $o(n),$ and therefore, by the Sublinear Equivalence theorem, the bandwidth was also $o(n).$ This time the separator set is quite large 
compared to previous results. 


Finally, let us mention a tightness result for Theorem~\ref{BollKomlos} by Abbasi~\cite{Sarmad}. He proved that for infinitely many $n$ there exist graphs $G$ and $H$ on $n$ vertices such that $\delta(G)=(1/2 +\eta)n$ and $bw(H)\le 4\eta n,$ still $H\not\subset G.$ So in general if one wants to be able to embed every graph with bandwidth $\eta n,$ then the minimum degree bound for the host graph must  
be larger than $n/2 +  \eta n/4.$ 

\section{Construction of $\mathcal{H}_{r,t}$ and proof of Theorem~\ref{fo1}}\label{harmadik}

In order to exhibit the infinite family of graphs $\mathcal{H}_{r,t}$ we first need to construct certain kind of bipartite expander graphs. 
We begin with defining a bipartite graph $F$ with vertex classes $V_1$ and $V_2$ such that $|V_1|=|V_2|=k$ and $F$ has relatively good expansion properties. Our construction of $F$ relies on the existence of so called Ramanujan graphs\footnote{We remark that here one may as well work with random regular bipartite graphs instead of explicit constructions.}: an $r$-regular (non-bipartite) graph $U$ is a Ramanujan graph if $\lambda \le 2\sqrt{r-1},$ where $\lambda$ is the second largest in absolute value of the eigenvalues of $U$ (since $U$ is $r$-regular, the largest eigenvalue is $r$). Lubotzky, Phillips and Sarnak~\cite{LPS}, and independently Margulis~\cite{Marg}, constructed for every $r=p+1$ where $p\equiv 1\mod 4$  
infinite families of $r$-regular graphs with second largest eigenvalues at most $2\sqrt{r-1}.$ 
We need a fact about these graphs, a lower bound for the number of edges between subsets of $U.$

\begin{lemma}\label{elszam}
Let $U$ be a graph as above. Then for every two subsets $A, B \subset V(U)$ where $|A|=ak$ and $|B|=bk$ we have
$$|e(A, B)-abrk|\le 2\sqrt{r-1}\sqrt{ab}k.$$
\end{lemma}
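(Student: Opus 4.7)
The plan is to apply the standard expander mixing lemma to $U$, exploiting the spectral bound $\lambda\le 2\sqrt{r-1}$ guaranteed by the Ramanujan property. Let $M$ be the adjacency matrix of $U$ and let $k=|V(U)|$ (consistent with the normalization $|A|=ak$, $|B|=bk$). Because $U$ is $r$-regular, the normalized all-ones vector $\mathbf{v}_1=\mathbf{1}/\sqrt{k}$ is an eigenvector with eigenvalue $r$, which is the top eigenvalue; every other eigenvalue $\lambda_i$ satisfies $|\lambda_i|\le 2\sqrt{r-1}$. Writing $\mathbf{1}_A,\mathbf{1}_B$ for the 0/1 indicator vectors of $A$ and $B$, the starting identity is $e(A,B)=\mathbf{1}_A^{\top} M\,\mathbf{1}_B$.

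Next I would split $\mathbf{1}_A = a\mathbf{1}+\mathbf{f}_A$, with $\mathbf{f}_A\perp\mathbf{1}$, and analogously $\mathbf{1}_B = b\mathbf{1}+\mathbf{f}_B$. A direct computation gives $\|\mathbf{f}_A\|^2 = ak(1-a)\le ak$ and, similarly, $\|\mathbf{f}_B\|^2\le bk$. Substituting into the quadratic form, using $M\mathbf{1}=r\mathbf{1}$, the two cross terms vanish by orthogonality, so
\[
e(A,B) \;=\; abrk \;+\; \mathbf{f}_A^{\top} M\,\mathbf{f}_B.
\]

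Finally I would bound the residual quadratic form by the operator norm of $M$ restricted to $\mathbf{1}^{\perp}$, which equals $\max_{i\ge 2}|\lambda_i|\le 2\sqrt{r-1}$. Cauchy--Schwarz then yields
\[
\bigl|\mathbf{f}_A^{\top} M\,\mathbf{f}_B\bigr| \;\le\; 2\sqrt{r-1}\,\|\mathbf{f}_A\|\,\|\mathbf{f}_B\| \;\le\; 2\sqrt{r-1}\,\sqrt{ab}\,k,
\]
which is exactly the claimed inequality after moving $abrk$ to the left-hand side.

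There is no genuine obstacle here: the argument is the textbook expander mixing lemma. The only subtlety worth flagging is that one must invoke the Ramanujan bound on the second largest eigenvalue \emph{in absolute value} (not just on $\lambda_2$); the definition of Ramanujan graph recalled in the paper already provides this, so nothing further is needed.
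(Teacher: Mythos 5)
Your proof is correct: it is the standard spectral proof of the expander mixing lemma, which is exactly the argument the paper outsources to the reference \cite{AS} rather than reproducing. The decomposition of the indicator vectors along $\mathbf{1}$ and its orthogonal complement, the vanishing cross terms, and the Cauchy--Schwarz bound via $\lambda\le 2\sqrt{r-1}$ all match the cited textbook proof, so there is nothing to add.
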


The proof of Lemma~\ref{elszam} can be found for example in~\cite{AS}.

\begin{corollary}\label{kov}
Let $U$ be an $r$-regular Ramanujan graph on $k$ vertices with $r\ge 35.$ Let us assume that $A, B\subset V(U)$ with $|A|=|B|=k/3$ and $A\cap B=\emptyset.$ Then $e(A, B)\ge 1.$
\end{corollary}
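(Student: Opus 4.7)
The plan is to apply Lemma~\ref{elszam} directly with $a=b=1/3$ and then argue that the resulting lower bound is strictly positive, so integrality forces $e(A,B)\ge 1$.

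First I would substitute $|A|=|B|=k/3$ into Lemma~\ref{elszam}, which gives
\[
e(A,B)\;\ge\; abrk - 2\sqrt{r-1}\sqrt{ab}\,k \;=\; \frac{rk}{9}-\frac{2\sqrt{r-1}}{3}k \;=\; \frac{k}{9}\bigl(r-6\sqrt{r-1}\bigr).
\]
Thus it suffices to check that the quantity $r-6\sqrt{r-1}$ is strictly positive whenever $r\ge 35$.

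Next I would reduce this to an elementary inequality. Since $r>6\sqrt{r-1}$ is equivalent (for $r>0$) to $r^{2}>36(r-1)$, i.e.\ $r^{2}-36r+36>0$, I would just evaluate the polynomial $p(r)=r^{2}-36r+36$ at the threshold $r=35$: $p(35)=1225-1260+36=1>0$. Since $p$ is increasing on $[18,\infty)$, the inequality $p(r)\ge 1$ holds for every $r\ge 35$. Therefore $r-6\sqrt{r-1}>0$, and consequently $e(A,B)\ge \frac{k}{9}(r-6\sqrt{r-1})>0$.

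Finally, because $e(A,B)$ is a non-negative integer and we just showed that it is strictly positive, it must be at least $1$. The only minor subtlety — and the one place a careless calculation could go wrong — is that at $r=35$ the lower bound is very tight (the two terms in $r-6\sqrt{r-1}$ almost cancel), so one must genuinely use integrality of $e(A,B)$ rather than trying to read a lower bound of size $\ge 1$ directly off of $\frac{k}{9}(r-6\sqrt{r-1})$ for small $k$. Apart from that, the argument is a one-line substitution into the expander mixing bound.
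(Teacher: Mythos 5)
Your proof is correct and follows essentially the same route as the paper: substitute $a=b=1/3$ into Lemma~\ref{elszam}, reduce to the case $r=35$ by monotonicity, and verify positivity numerically. Your explicit appeal to integrality of $e(A,B)$ is a worthwhile detail that the paper's ``straightforward computation'' glosses over, since at $r=35$ the bound $\tfrac{k}{9}\bigl(r-6\sqrt{r-1}\bigr)\approx 0.0016\,k$ is far below $1$ for moderate $k$.
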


\begin{proof}
 It is easy to see that the expression of Lemma~\ref{elszam} gives a lower bound for $e(A, B)$ which is monotone increasing in $r.$ Hence it is sufficient to apply Lemma~\ref{elszam} with $r=35$ and $a=b=1/3.$
Straightforward computation gives what was desired.
\end{proof}

We are ready to discuss the details of the construction of $F.$ Given an $r$-regular Ramanujan graph $U$ with $r\ge 35$ the vertex classes of $F$ will be copies of $V(U)$: for every $x\in V(U)$ we have two copies of it, $x_1\in V_1$
and $x_2\in V_2.$ For every $xy\in E(U)$ we include the edges $x_1y_2$ and $x_2y_1$ in $E(F).$ Finally, for every $x\in V(U)$ we will also have the edge $x_1x_2$ in $E(F).$ Observe that $F$
is an $(r+1)$-regular bipartite graph.
The following claims are crucial for the construction of $\mathcal{H}_{r,t}.$ 

\begin{claim}\label{elszam2}
Let $A\subset V_1$ and $B\subset V_2$ be arbitrary such that $|A|=|B|=k/3.$ Then $e_F(A, B)\ge 1.$
\end{claim}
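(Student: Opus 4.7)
The plan is to reduce Claim~\ref{elszam2} to Corollary~\ref{kov} by exploiting the two kinds of edges present in $F$: the ``vertical'' edges $x_1x_2$ and the edges inherited from the Ramanujan graph $U$.

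First I would identify $A$ and $B$ with subsets of $V(U)$. Concretely, set
\[
A' = \{x \in V(U) : x_1 \in A\}, \qquad B' = \{x \in V(U) : x_2 \in B\}.
\]
Then $|A'| = |B'| = k/3$, and these two subsets of $V(U)$ are exactly what Corollary~\ref{kov} is tailored for, provided they are disjoint. This suggests splitting into two cases.

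Case 1: $A' \cap B' \neq \emptyset$. Then for any $x \in A' \cap B'$, the ``vertical'' edge $x_1 x_2$ belongs to $E(F)$, and by construction $x_1 \in A$ and $x_2 \in B$, so $e_F(A,B) \ge 1$ immediately.

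Case 2: $A' \cap B' = \emptyset$. Now $A'$ and $B'$ are disjoint subsets of $V(U)$ of size $k/3$, so Corollary~\ref{kov} yields an edge $xy \in E(U)$ with $x \in A'$ and $y \in B'$. By the definition of $F$, the presence of $xy \in E(U)$ forces $x_1 y_2 \in E(F)$; and $x_1 \in A$, $y_2 \in B$ by the definition of $A'$ and $B'$. Hence again $e_F(A,B) \ge 1$.

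There is no real obstacle here beyond keeping the two kinds of edges straight: the only subtlety is noticing that the vertical edges handle the case where $A'$ and $B'$ share a vertex, which is exactly the case \emph{not} covered by Corollary~\ref{kov} (which requires disjointness). In both cases the conclusion follows, completing the proof.
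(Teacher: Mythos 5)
Your proof is correct and follows essentially the same two-case argument as the paper: use a vertical edge $x_1x_2$ if the projections of $A$ and $B$ to $V(U)$ intersect, and otherwise apply Corollary~\ref{kov} to the disjoint projections. The paper's version is just more terse; your explicit introduction of $A'$ and $B'$ spells out the same reduction.
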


\begin{proof}
If there exists $x\in V(U)$ such that $x_1\in A$ and $x_2\in B$ then we are done since every $x_1x_2$ edge is present in $F.$ If there is no such $x\in V(U)$ then we can apply Corollary~\ref{kov}
and obtain what is desired.
\end{proof}

\begin{claim}\label{expanzio2}
For every $A\subset V_1$ we have $|N_F(A)|\ge |A|.$ Analogous statement holds for any subset $B\subset V_2.$ 
\end{claim}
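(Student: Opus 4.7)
The plan is to exploit the perfect matching that is built into the very construction of $F$. By definition, for every $x \in V(U)$ the edge $x_1 x_2$ belongs to $E(F)$, so the set $M = \{x_1 x_2 : x \in V(U)\}$ is a perfect matching between $V_1$ and $V_2$ contained in $F$.

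For an arbitrary $A \subset V_1$, every $x_1 \in A$ has its matching partner $x_2$ as a neighbor in $F$, and distinct vertices of $A$ produce distinct partners. The map $x_1 \mapsto x_2$ is therefore an injection from $A$ into $N_F(A)$, which immediately gives $|N_F(A)| \geq |A|$. The analogous inequality for $B \subset V_2$ follows by the symmetric injection $x_2 \mapsto x_1$ along the same matching $M$.

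I do not expect any real obstacle here: the claim is essentially an observation about how $F$ was defined, and the diagonal matching $M$ makes it automatic (in fact it is just the Hall condition supplied for free by a perfect matching). The deeper, non-trivial expansion content of $F$ beyond this Hall-type lower bound is precisely what Claim~\ref{elszam2} captures via the Ramanujan eigenvalue bound from Lemma~\ref{elszam}; that stronger input is not needed for Claim~\ref{expanzio2} itself, which serves as a clean structural fact to be invoked in the later stages of the construction of $\mathcal{H}_{r,t}$.
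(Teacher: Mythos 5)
Your proof is correct and matches the paper's own argument, which likewise derives the claim from the perfect matching $\{x_1x_2 : x\in V(U)\}$ built into $F$. The injection $x_1\mapsto x_2$ you describe is exactly the intended justification.
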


\begin{proof}
The claim easily follows from the fact that we included a perfect matching in $F$ when for every $x\in V(F)$ we added the $x_1x_2$ edge to $E(F).$ 
\end{proof}

Observe that we can construct a bipartite graph $F$ with $|V(F)|=2k$ having the above properties whenever there exists a Ramanujan graph $U$ with $|V(U)|=k,$ for the latter we also assume that $r\ge 35.$ 
Thus, there exists an infinite sequence of $\{F_i\}_{i=1}^{\infty}$ graphs on increasing number of vertices, 
say, $F_i$ has $2k_i$ vertices.

We are ready to define $\mathcal{H}_{r,t}.$ Each graph from this class
is $\gamma$-separable where $\gamma=\gamma(r)$ can be relatively  small as we will see soon. Still, the bandwidth of each of them is very large. Hence, $\mathcal{H}_{r,t}$ demonstrates that in spite of sublinear equivalence of separability and bandwidth, there is no {\it linear} equivalence.

The construction of $\mathcal{H}_{r,t}$ is somewhat specific, we do it with foresight as our goal is not only to further explore the relation of separability and bandwidth but also to be able to embed the elements of $\mathcal{H}_{r,t}$ later.

\begin{definition} Let $n, m \in \mathbb{N}$ be sufficiently large, $r\ge 35,$ $t\ge 1$ be integers, and set $\gamma=\gamma(r)=  1/(8r2^r).$ Let $F_i$ be the $(r+1)$-regular 
bipartite graph on $2k_i$ vertices given above such that
$k_i$ is the largest for which $\gamma n\ge 2k_i.$
	The elements of $\mathcal{H}_{r,t}$ are constructed as follows. Given $n$ we let $H=(A,B;E)\in \mathcal{H}_{r,t}$ to be the following bipartite graph. 

	\begin{enumerate}
                     \item  $V(H)=A\cup B$ and $|H| = |A\cup B| = n,$
		\item let $S\subset V$ such that  $S= S_A\dot{\cup} S_B$ and $|S_A| = |S_B| = k_i$, 
		\item $E(H[S_A])=E(H[S_B])=\emptyset$ and $H[S_A, S_B] = F_i,$ 
		\item $D=\Delta(H)=O(r2^r),$
		\item $H-S$ has exactly $|S|$ isomorphic components; each component is a tree $T$ which contains a path on $t$ vertices, and one of its endpoints, the {\em last vertex}, has $D-1$ leaves attached to it; we call the other endpoint of this path the {\em first vertex}.

		\item every $x\in S$ has a unique neighbor $y$ in $V-S$ which is the first vertex of one of the tree components of $H-S,$ moreover, every first vertex has exactly one neighbor in $S.$ 
	\end{enumerate}
\end{definition}

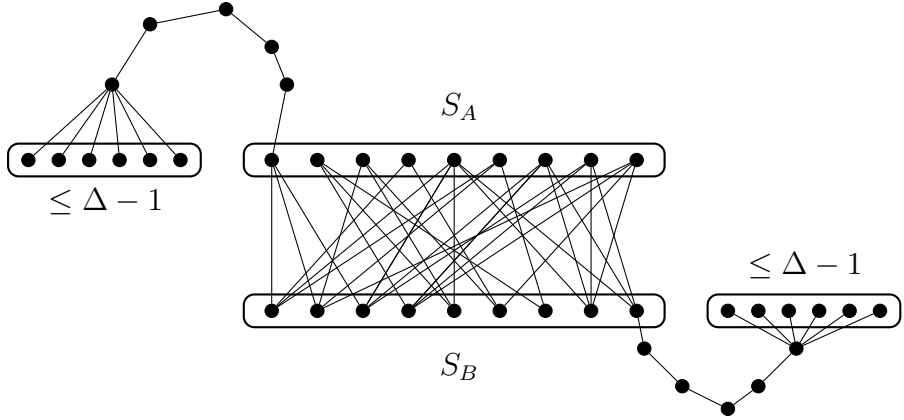
\begin{figure}[h!]
	\centering
	\begin{tikzpicture}[vertex/.style={draw,fill,circle,inner sep=0pt,minimum size=5pt}]

\foreach \x in {1,...,9}{
	\foreach \t in {0,2}
	{
		\node[vertex] (s\x\t) at (\x*0.6,\t)  {};
	}
}

\foreach \x/\t in {4/8,1/4,9/7,3/8,9/5,3/5,2/3,4/7,1/1,4/9,5/2,9/8,4/7,2/9,6/9,7/6,1/6,1/5,2/6,3/1,8/8,6/4,8/5,3/5,8/9,5/3,7/2,4/2,6/3,5/5,2/1,8/7,3/7}
{
	\draw (s\x0) -- (s\t2);
}

\draw[thick,rounded corners]     ($(s10.north west)+(-0.3,0.15)$) rectangle ($(s90.south east)+(0.3,-0.15)$) node[below,yshift=-0.2cm,xshift=-2.7cm]{$S_B$};
\draw[thick,rounded corners]     ($(s12.north west)+(-0.3,0.15)$) rectangle ($(s92.south east)+(0.3,-0.15)$) node[above,yshift=0.6cm,xshift=-2.7cm] {$S_A$};

\foreach \x/\t/\i in {0.8/3/0,0.6/3.5/1,0/4/2,-1/3.8/3,-1.5/3/4}
{
	\node[vertex] (x\i) at (\x,\t) {};
}
\draw (s12) -- (x0) -- (x1) -- (x2) -- (x3) -- (x4);

\foreach \x in {1,...,6}
{
	\node[vertex] (a\x) at (\x*0.4-3,2) {};
	\draw (x4) -- (a\x);
}
\draw[thick,rounded corners]     ($(a1.north west)+(-0.2,0.15)$) rectangle ($(a6.south east)+(0.2,-0.15)$) node[below,xshift=-1.25cm] {$\leq\Delta-1$};

\foreach \x/\t/\i in {5.5/-0.5/0,6/-1/1,6.6/-1.3/2,7/-1/3,7.5/-0.5/4}
{
	\node[vertex] (y\i) at (\x,\t) {};
}
\draw (s90) -- (y0) -- (y1) -- (y2) -- (y3) -- (y4);

\foreach \x in {1,...,6}
{
	\node[vertex] (b\x) at (\x*0.4+6.2,0) {};
	\draw (y4) -- (b\x);
}
\draw[thick,rounded corners]     ($(b1.north west)+(-0.2,0.15)$) rectangle ($(b6.south east)+(0.2,-0.15)$) node[above,xshift=-1.25cm,yshift=0.5cm] {$\leq\Delta-1$};

\end{tikzpicture}
\caption{The separator set $S$ and the way the components of $H-S$ are connected to $S$}\protect\label{fig1}
\end{figure}

Note that $S$ is a separator set of $H$ with $|S|=2k_i\approx \gamma n,$ every component of $H-S$ has less than $t+D$ vertices. From this one can easily obtain the bound $D< 3n/k_i\approx 6/\gamma.$ We remark that when $t=1$ then 
the first and the last vertex in every path are the same. 
The following lemma is crucial for bounding the bandwidth of $H\in \mathcal{H}_{r,t}.$

\begin{lemma} \label{shortpath}
Let $H$ be an element of $\mathcal{H}_{r,t}$ on $n$ vertices. Assume that $X,Y\subset V(H)$ with $|X|, |Y|\geq 0.35n$ and $X\cap Y=\emptyset.$ Then there exists an $x\in X$ and a $y\in Y$ such that the distance of $x$ and $y$ 
is at most $2t + 4.$
\end{lemma}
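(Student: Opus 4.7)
The plan is to project both $X$ and $Y$ onto the separator $S$ and then use the expansion properties of $F_i$ (Claims~\ref{elszam2} and~\ref{expanzio2}) to exhibit a short path inside $S$ that lifts back to a short $X$--$Y$ path in $H$. Define $\sigma\colon V(H)\to S$ by $\sigma(v)=v$ for $v\in S$ and, for $v\in V(H)\setminus S$, $\sigma(v)$ = the unique vertex of $S$ attached (via the first vertex) to the tree component containing $v$. From the structure of $H$ (see Figure~\ref{fig1}), $d_H(v,\sigma(v))\le t+1$ for every $v\in V(H)$, the worst case being a leaf attached to the last vertex of its tree. Each fibre $\sigma^{-1}(s)$ is the $(t+D)$-vertex subtree $\{s\}\cup T_s$, so since $|H|=2k_i(t+D)$, the hypothesis $|X|\ge 0.35n$ gives $|\sigma(X)|\ge 0.7 k_i$, and likewise $|\sigma(Y)|\ge 0.7 k_i$.

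I would then distinguish cases. If $\sigma(X)\cap\sigma(Y)\neq\emptyset$, pick a common $s$ and any $x\in \sigma^{-1}(s)\cap X$, $y\in\sigma^{-1}(s)\cap Y$; these are distinct because $X\cap Y=\emptyset$, and they lie in a subtree of diameter $t+1$, so $d_H(x,y)\le t+1$. Otherwise $\sigma(X)$ and $\sigma(Y)$ are disjoint, and by pigeonhole each of them has more than $k_i/3$ vertices on a single side of the bipartition $S=S_A\dot{\cup} S_B$. If these ``heavy sides'' are opposite, Claim~\ref{elszam2} produces an edge $s_1s_2\in E(F_i)$ joining them, and lifting to preimages $x\in\sigma^{-1}(s_1)\cap X$, $y\in\sigma^{-1}(s_2)\cap Y$ gives $d_H(x,y)\le(t+1)+1+(t+1)=2t+3$. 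If the heavy sides coincide, say both lie in $S_A$, first apply Claim~\ref{expanzio2} to push the $\sigma(X)$-part across to $S_B$, obtaining a subset of $S_B$ of size at least $k_i/3$, and then apply Claim~\ref{elszam2} to this image together with the $\sigma(Y)$-part in $S_A$. This produces a path $s_1s_2s_3$ of length $2$ in $F_i$ with $s_1\in\sigma(X)$, $s_3\in\sigma(Y)$, which lifts through the corresponding tree components to a path of length exactly $(t+1)+2+(t+1)=2t+4$ between $X$ and $Y$.

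The main obstacle is this same-side subcase: it is the only situation in which a single $F_i$-edge is unavailable and a two-hop detour in $F_i$ is forced, and it is also the unique place where the vertex expansion of Claim~\ref{expanzio2} (rather than just the edge count of Claim~\ref{elszam2}) is required. The bound $2t+4$ stated in the lemma is tight precisely because of this case. Everything else in the argument is pigeonhole plus the elementary distance estimate $d_H(v,\sigma(v))\le t+1$ inside a single tree component.
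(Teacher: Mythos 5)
Your proof is correct and follows essentially the same route as the paper: the paper also reduces to the separator by passing from $X$ and $Y$ to the first vertices of the components they meet (your projection $\sigma$), finds sets of size at least $k_i/3$ in $S_A$ or $S_B$ by the same pigeonhole count, and then uses Claim~\ref{elszam2} directly in the opposite-sides case and Claim~\ref{expanzio2} followed by Claim~\ref{elszam2} in the same-side case, yielding the identical bounds $2t+3$ and $2t+4$. Your extra case $\sigma(X)\cap\sigma(Y)\neq\emptyset$ is harmless but unnecessary, since neither claim requires the two projected sets to be disjoint.
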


\begin{proof}

Denote the vertices of $H-S$ closer to $S_A$ by $A^*$, and analogously, the vertices of $H-S$ closer to $S_B$ by $B^*.$ By the construction of $H$ we have $|A^*| = |B^*| \ge (1-\gamma)n/2$.
Note that $\gamma<0.01$ since $r\ge 35.$ Hence we have that $|X-S|\geq 0.34n$ and $|Y-S|\geq 0.34n$. Thus, either $|X\cap A^*| \geq |A^*|/3$ or $|X\cap B^*|\geq |B^*|/3$. Without loss of generality, suppose the former. This also implies that at least $1/3$ of the components of $A^*$ have vertices in $X.$ Denote the first vertices in these components by $X^*.$ 
We also let $Y^*_A$ denote the set of first vertices of those components in $A^*$ that have at least one vertex from $Y,$ and analogously, $Y^*_B$ denotes the set of first vertices of those components in $B^*$ that have at least one vertex from $Y.$

Let $X_A=N(X^*)\cap S,$ $Y_A=N(Y^*_A)\cap S$ and $Y_B= N(Y^*_B)\cap S.$ Using these notations we have that $|X_A|\ge k/3$ and either $|Y_A|\geq {k/3}$, or $|Y_B|\geq {k/3}.$

If $|Y_B|\geq {k/3}$, then by Claim~\ref{elszam2} there is an edge $z_1z_2$ between $X_A$ and $Y_B,$ and therefore we have a path $xv_s \ldots v_1 z_1z_2 u_1\ldots u_q y$ of length $\le 2t + 3$, 
where $x\in X, y\in Y, v_i\in A^*$, $z_1\in X_A$, $z_2 \in Y_B$, $u_i\in B^*$. 
	
If $|Y_B|<\frac{k}{3}$, then $|Y_A| \geq {k/3}.$ Let $Y'_B=N(Y_A)\cap S.$ Claim~\ref{expanzio2} implies that 
$|Y'_B|\ge |Y_A| \ge k/3,$ so by Claim~\ref{elszam2} 
$H$ has an edge $z_2z_1$ between $Y_B'$ and $X_A.$ Thus, we have a path $xv_s \ldots v_1 z_1z_2z_3\linebreak[1] u_1\ldots u_q y$ of length $\le 2t + 4$, where $x\in X, y\in Y, v_i \in A^*, u_i\in B^*, z_1\in X_A$, $z_3 \in Y_A$ and $z_2 \in Y_B'.$ 
\end{proof}

\begin{corollary}
	Let $H$ be an element of $\mathcal{H}_{r,t}$ on $n$ vertices. Then the bandwidth of $H$ is at least $\frac{0.3n}{2t +4}$.
	\label{corollary:large-bandwidth}
\end{corollary}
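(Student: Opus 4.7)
The plan is to extract the bandwidth lower bound directly from Lemma~\ref{shortpath} by the standard ``diameter vs.\ bandwidth'' trick. Suppose $H$ admits a labelling $\pi : V(H)\to \{1,\ldots, n\}$ witnessing bandwidth $b=bw(H)$, i.e.\ $|\pi(u)-\pi(v)|\le b$ for every edge $uv$. Set
\[
    X = \pi^{-1}\bigl(\{1,\ldots,\lceil 0.35 n\rceil\}\bigr),\qquad
    Y = \pi^{-1}\bigl(\{\lfloor 0.65 n\rfloor+1,\ldots, n\}\bigr).
\]
Then $X\cap Y=\emptyset$ and $|X|,|Y|\ge 0.35n$, so the hypotheses of Lemma~\ref{shortpath} are satisfied.

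Next I would observe that any path $x=w_0,w_1,\ldots,w_\ell=y$ in $H$ from $X$ to $Y$ has label differences satisfying $|\pi(w_i)-\pi(w_{i-1})|\le b$, and therefore by the triangle inequality
\[
    \ell\cdot b \;\ge\; \sum_{i=1}^{\ell}|\pi(w_i)-\pi(w_{i-1})| \;\ge\; |\pi(y)-\pi(x)| \;\ge\; 0.3 n,
\]
because $\pi(y)>0.65n$ and $\pi(x)\le 0.35n$. Hence every $X$--$Y$ path has length at least $0.3n/b$.

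Combining this with Lemma~\ref{shortpath}, which produces an $X$--$Y$ path of length at most $2t+4$, yields $2t+4 \ge 0.3n/b$, i.e.\ $b \ge \frac{0.3n}{2t+4}$, as claimed. There is no real obstacle here: the only thing to double-check is that $n$ is large enough that the rounding in the definition of $X$ and $Y$ does not spoil the inequality $|X|,|Y|\ge 0.35n$, which is immediate for sufficiently large $n$ (and the conclusion holds trivially when $n$ is small, since $bw(H)\ge 1$).
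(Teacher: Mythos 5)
Your proposal is correct and is essentially identical to the paper's own proof: both take the first and last $0.35n$ vertices of a bandwidth-optimal ordering as $X$ and $Y$, invoke Lemma~\ref{shortpath} to obtain an $X$--$Y$ path of length at most $2t+4$, and conclude via the triangle inequality that some edge of this path spans at least $0.3n/(2t+4)$ in the ordering. Your version merely writes out the telescoping/rounding details more explicitly.
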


\begin{proof} Take an arbitrary ordering $\mathcal{P}$ of the vertices of $H.$ Let $X$ be the first $0.35n$ vertices, and 
$Y$ be the last $0.35n$ vertices of $\mathcal{P}$. Using Lemma~\ref{shortpath} there is an $x\in X$ and an $y\in Y$ such that the distance of $x$ and $y$ is at most $2t + 4$. Their distance in $\mathcal{P}$ is at least $0.3n.$ Thus at least one of the edges of the shortest path between $x$ and $y$ must have ``length'' at least $\frac{0.3n}{2t + 4},$ from which the bound for the bandwidth follows immediately.
\end{proof}

 With this we proved Theorem~\ref{fo1}.
Observe that choosing $t=1$ results in graphs having bandwidth at least $3n/60=n/20$ while being $\gamma$-separable.
It is easy to see that, using the above ideas, one can define a much wider class of graphs having very large bandwidth that
are also $\gamma$-separable. Our main goal, however, is not only to construct but also to be able to embed such graphs.

\medskip

Recall the notion of robust expanders.
As we mentioned in the introduction, Knox and Treglown~\cite{KT} embedded spanning subgraphs of sublinear bandwidth into robust expanders. 
The following example shows that graphs of $\mathcal{H}_{r,t}$ not only have very large bandwidth,
these graphs are not necessarily subgraphs of robust expanders. Hence, in the theorem of Knox and Treglown one cannot replace sublinear bandwidth by 
$\gamma$-separability, unless $\gamma$ is very small. In fact the proofs of~\cite{BPstb} and~\cite{KT} works only
in case $\gamma\le 1/\ell,$ where $\ell$ denotes the number of clusters in a sufficiently large graph after applying the Regularity lemma with some small parameter $\varepsilon>0.$
It is known (see~\cite{Gowers}) that $\ell$ is bounded from below by a tower function of $1/\varepsilon.$  

Next we us construct a robust expander as follows. Let $G=(V,E)$ be a graph on $n$ vertices such that  the vertex set of $G$ is 
$V = A_1\dot{\cup}A_2\dot{\cup}\cdots\dot{\cup}A_{100}$, here $|A_i|=\frac{n}{100}$ for every $1 \leq i\le 100.$
The edges of $G$ are defined as follows: $G[A_i, A_{i+1}]$ is a complete bipartite graph for every $1 \le i\le 99,$ and 
and $G[A_1], G[A_{100}]$ are complete graphs on $n/100$ vertices. 

From the following remark one can easily see that $G$ is a $(\nu, \tau)$-robust expander, where $1/n<\nu$ and $\tau\ge 100\nu.$
Let $1<i<100$ and $Q\subset V(G)$ be an arbitrary set. Then every vertex in $A_{i-1}\cup A_{i+1}$ has $|Q\cap A_i|$ neighbors in $Q\cap A_i.$
Furthermore, every vertex in $A_1\cup A_2$ has at least $|Q\cap A_1|-1$ neighbors in $Q\cap A_1,$ unless $|Q\cap A_1|\le 1.$ Analogous  
statement holds for $A_{100}.$ 

\begin{lemma}	
	Let $H$ be a graph from $\mathcal{H}_{r,t}$ on $n$ vertices and let $G$ be as above. Then $H\not\subset G$ if $t\leq 13.$
\end{lemma}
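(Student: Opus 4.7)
The plan is to derive a contradiction from an assumed embedding $\phi : V(H)\to V(G)$ (a bijection, since $|V(H)|=|V(G)|=n$) by exploiting the tension between the short-path property of $H$ (Lemma~\ref{shortpath}) and the large diameter-like structure of $G$ between its two end blocks.

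First I would note the key structural fact about $G$: because the only edges of $G$ are either inside $A_1$, inside $A_{100}$, or between consecutive $A_i,A_{i+1}$, the graph distance in $G$ from any vertex in $A_i$ to any vertex in $A_j$ is exactly $|i-j|$ whenever $1<i<100$ or $1<j<100$, and is at least $|i-j|$ in general. Consequently, for the two ``end slabs''
\[
X^* := A_1\cup A_2\cup\cdots\cup A_{35},\qquad Y^* := A_{66}\cup A_{67}\cup\cdots\cup A_{100},
\]
every path in $G$ from $X^*$ to $Y^*$ must traverse all of $A_{36},A_{37},\ldots,A_{65}$, so the distance in $G$ between $X^*$ and $Y^*$ is at least $31$.

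Next I would transfer the problem to $H$ by setting $X := \phi^{-1}(X^*)$ and $Y := \phi^{-1}(Y^*)$. Since $|A_i|=n/100$, we have $|X|=|Y|=0.35n$, and $X\cap Y=\emptyset$. Lemma~\ref{shortpath} then produces vertices $x\in X$ and $y\in Y$ with $d_H(x,y)\le 2t+4$. Because $\phi$ is an embedding, the image of any $x$--$y$ path in $H$ is an walk from $\phi(x)\in X^*$ to $\phi(y)\in Y^*$ in $G$ of the same length, hence $d_G(X^*,Y^*)\le 2t+4$. If $t\le 13$ then $2t+4\le 30<31$, contradicting the distance bound established in the previous step.

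The argument has no real obstacle — the only thing to verify carefully is the choice of slab thickness: I need the slabs to be simultaneously large enough to force Lemma~\ref{shortpath} to apply (so each must have size $\ge 0.35n$, forcing $35$ blocks on each side) while being separated by enough intermediate blocks to beat $2t+4$. With $35$ blocks on each side there remain $30$ middle blocks, giving a distance of $31$, which is exactly the number that handles all $t\le 13$. No other property of $H$ besides Lemma~\ref{shortpath} and the fact that $|H|=n$ is needed.
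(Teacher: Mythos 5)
Your proposal is correct and follows exactly the paper's argument: the paper also takes $X=A_1\cup\cdots\cup A_{35}$ and $Y=A_{66}\cup\cdots\cup A_{100}$, invokes Lemma~\ref{shortpath} to get a path of length at most $2t+4$ between the preimages, and contrasts this with the distance $31$ in $G$. Your write-up merely spells out the pullback via the embedding and the distance computation in more detail.
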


\begin{proof}
	Let $X=A_1\cup \ldots \cup A_{35}$ and $Y=A_{66}\cup \ldots \cup A_{100}.$ If one embeds $H$ into $G,$ then between the 
vertices that were mapped onto some vertex of $X$ and those that were mapped onto some vertex of $Y$ there is a path having length
at most $2t+4.$ In $G$ the shortest path between $X$ and $Y$ has length $31.$ Since $2t+4<31$ whenever $t\le 13,$ we proved what
was desired.
\end{proof}

\section{Proof of Theorem~\ref{fo2}}

The proof of Theorem~\ref{fo2} is very similar to the proof of the main result of~\cite{wellsep}.
First let us state a special case of the main theorem of~\cite{wellsep} for embedding bipartite graphs with small separators.

\begin{theorem}\label{wellsep2}\cite{wellsep}
For every $\epsilon>0$ and positive integer $D$ there exists an $n_0=n_0(\epsilon, D)$ such that the following holds. Assume that $H$ is a bipartite graph on $n\ge n_0$ vertices which has a separator set $S$ such that $|S|=o(n),$ and every component of $H-S$ has $o(n)$ vertices. Assume further
that $\Delta(H)\le D.$ Let $G$ be an $n$-vertex graph such that $\delta(G)\ge (1/2+\epsilon)n.$ Then $H\subset G.$
\end{theorem}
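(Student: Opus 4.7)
The plan is to follow the standard Regularity Lemma--Blow-up Lemma scheme, specialized to the bipartite host. First I would apply Szemerédi's Regularity Lemma to $G$ with a small parameter $\varepsilon' = \varepsilon'(\epsilon, D)$, obtaining an equitable partition $V(G) = V_0 \cup V_1 \cup \cdots \cup V_\ell$ with $|V_0| \le \varepsilon' n$, and build the reduced graph $R$ by joining $i,j$ whenever $(V_i,V_j)$ is $\varepsilon'$-regular with density at least some $d = d(\epsilon) > 0$. A routine counting argument yields $\delta(R) \ge (1/2 + \epsilon/2)\ell$, so by Dirac's theorem $R$ contains a perfect matching $M = \{e_1,\dots,e_{\ell/2}\}$. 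A standard cleaning (slicing off a small sub-cluster from each $V_i$ and dumping it into $V_0$) ensures that each matching pair $(V_i, V_j)$ corresponding to an $e_s \in M$ is $(\varepsilon', d)$-super-regular.

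Since $H$ is bipartite with color classes $A,B$, each matching edge can host $A$-vertices on one side and $B$-vertices on the other. I would greedily partition the components of $H-S$ into $\ell/2$ buckets, one per matching edge, balancing the loads so that each bucket occupies roughly $2m = 2|V_i|$ vertices split evenly between $A$ and $B$, leaving a small reserved buffer of $o(m)$ spots in every cluster. This bin-packing is easy because every component of $H-S$ has only $o(n)$ vertices, which is much smaller than $m$, so components fit into buckets without ever exceeding the cluster capacities.

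The separator $S$ is embedded first, before the components. Using $|S| = o(n)$, $\Delta(H) \le D$, and $\delta(G) \ge (1/2+\epsilon)n$, a straightforward greedy procedure places the vertices of $S$ one by one into the exceptional set $V_0$ together with the reserved buffer: at each step the current vertex has at most $D$ already-placed neighbors in $G$, so its candidate set is of size $\Omega(n)$ and a suitable image is easy to find. Crucially, I would arrange the choices so that for every $s \in S$ and every neighbor $w \in H-S$ of $s$, the image of $s$ has at least $(d - \varepsilon')|V_j|$ neighbors inside the cluster $V_j$ to which $w$ has been pre-assigned. Once $S$ is embedded, the Blow-up Lemma is applied independently to each super-regular pair $(V_i, V_j)$ to embed the subgraph of $H-S$ assigned to it, using the neighborhood restrictions dictated by the already-embedded $S$-vertices as the image-restrictions on the finitely many vertices that are $S$-neighbors in $H$.

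The main obstacle is ensuring compatibility between the embedding of $S$ and the input requirements of the Blow-up Lemma. Each image of an $s \in S$ must be chosen so that, for every cluster $V_i$ receiving an $H-S$ neighbor of $s$, the restricted candidate set $N_G(s) \cap V_i$ still has linear size; moreover the number of vertices in $V_i$ with prescribed image-candidate sets must stay sublinear so that the Blow-up Lemma hypotheses are met. This is handled jointly by the bounded degree of $H$ (each $s$ imposes $O(1)$ constraints per cluster), the super-regularity of the matching pairs (restricted neighborhoods retain size $\Omega(m)$), and the $o(n)$ bound on $|S|$ (the total number of restricted vertices per cluster is $o(m)$). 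Because $H$ is bipartite we entirely avoid the cross-matching apparatus needed in $k$-chromatic versions of the argument, which is precisely why a perfect matching in $R$, rather than a Hamilton cycle or a more elaborate skeleton, suffices.
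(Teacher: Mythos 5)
Your overall architecture is the same as the paper's (and as the proof in the cited reference): Regularity Lemma, reduced graph of minimum degree above $\ell/2$, a perfect matching made super-regular, distribution of the small components of $H-S$ over the matching edges, separate treatment of $S$, and a final application of the Blow-up Lemma with image restrictions. However, there is a genuine gap in your compatibility step, and it lies exactly at the point you yourself flag as ``the main obstacle.'' You first pre-assign the components of $H-S$ to clusters and \emph{then} try to choose the image of each $s\in S$ so that it has at least $(d-\varepsilon')|V_j|$ neighbors in every cluster $V_j$ that received a neighbor of $s$. For a host graph with $\delta(G)\ge(1/2+\epsilon)n$, a vertex of $G$ is only guaranteed to have $\Omega(m)$ neighbors in slightly more than half of the clusters; for any single prescribed cluster $V_j$ the set of good image candidates has size only about $(1/2+\epsilon)n$. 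Since $H$ is a general bounded-degree graph, a vertex $s\in S$ may have up to $D$ neighbors in $H-S$ lying in components pre-assigned to $D$ different clusters, and the intersection of $D$ sets each of size roughly $(1/2+\epsilon)n$ can be empty already for $D\ge 3$. So a valid image for $s$ need not exist under your ordering. The paper resolves this by reversing the dependency: $S$ is embedded first without reference to the component assignment, and afterwards the affected vertices of $H-S$ (the ``first vertices'') are \emph{reassigned} to clusters in which the already-chosen image has many neighbors --- the minimum degree of the reduced graph guarantees at least $\Omega(\epsilon\ell)$ admissible choices of new cluster that remain adjacent to the cluster of the vertex's other neighbors, and the reassignments can be spread so that each cluster absorbs only $o(m)$ of them.

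A secondary point: for a \emph{spanning} embedding the number of $H$-vertices assigned to each cluster must equal the cluster size exactly, not just up to a $o(m)$ buffer. Your greedy bin-packing only gives approximate balance; the paper's proof needs an explicit rebalancing step (moving low-degree vertices such as leaves between clusters along edges of the reduced graph, possibly in two hops when the relevant clusters are not adjacent) before the Blow-up Lemma can be invoked. Neither gap is fatal to the strategy --- both are repaired by the reassignment machinery of the reference --- but as written the proof does not go through.
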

 
One can observe the similarities with Theorem~\ref{fo2}. The main difference is that in Theorem~\ref{fo2} the separator set can be very large compared to the separator set in Theorem~\ref{wellsep2}. This difference requires a new embedding tool.

\subsection{Main tools for the proof}

The Regularity lemma of Szemer\'edi~\cite{SzRL} and the Blow-up lemma~\cite{KSSz1} plays a very important role in the proof. 
 While we assume familiarity with these tools, below we give a brief summary of the necessary definitions and results. 
The interested reader may consult with the survey paper by Koml\'os and Simonovits~\cite{KS} also for further details.

\begin{definition}
		Given a graph $G$ and two disjoint subsets $X, Y \subset V(G),$ the \textit{density} between $X$ and $Y$  is
	
	\begin{equation}
	d(X,Y) = \frac{e(X,Y)}{|X||Y|}.
	\end{equation}
\end{definition}

\begin{definition}
	We call a pair $(A,B)$ of disjoint vertex sets in $G$ $\eps$-regular, if for every $X\subset A$ and $Y\subset B$ we have $|d(X,Y)-d(A,B)|<\eps$, whenever $|X|>\eps|A|$ and $|Y|>\eps |B|$.
\end{definition}

We will need the so called Degree Form of the celebrated Regularity lemma of Szemer\'edi:
\begin{lemma}
	\label{reglemma_deg}
	For every $\eps>0$ there is an $M = M(\eps)$ such that if $G = (V,E)$ is any graph and $d\in[0,1]$, then there is a partition of the vertex set $V$ into $\ell+1$ clusters $W_0,W_1,\ldots,W_\ell$ and there is a subgraph $G'$ of $G$ such that
	\begin{itemize}
		\item $\ell\leq M$,
		\item $|W_0|\leq \eps|V|$,
		\item All clusters $W_i,\ i\geq 1$ are of the same size $m$
		\item $deg_{G'}(v) > deg_G(v) - (d+\eps)|V|$ for all $v\in V$,
		\item $W_i$ is an independent set in $G'$ for all $i\geq 1$,
		\item All pairs $(W_i,W_j)$ ($1\leq i<j\leq \ell$) are $\eps$-regular, each with density either 0 or at least $d$ in $G'$.
	\end{itemize}
\end{lemma}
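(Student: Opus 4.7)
The Degree Form is the classical strengthening of the original Szemer\'edi Regularity Lemma, and I plan to derive it from the original form in the standard way. I would first apply the original lemma with an auxiliary parameter $\eps' \ll \eps$ (say $\eps' = \eps^2/100$) and with lower bound $m_0 \ge \lceil 1/\eps \rceil$; this produces a partition $V = W_0 \cup W_1 \cup \cdots \cup W_\ell$ with $m_0 \le \ell \le M'(\eps', m_0)$, $|W_0| \le \eps' n$, equal cluster sizes $m \le \eps n$ for $i \ge 1$, and with all but at most $\eps' \binom{\ell}{2}$ of the pairs $(W_i, W_j)$ being $\eps'$-regular (with no density information).

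Next, I would form $G'$ from $G$ by deleting every edge that lies (a) inside some cluster $W_i$ with $i \ge 1$, (b) inside an $\eps'$-irregular pair $(W_i, W_j)$ with $i, j \ge 1$, or (c) inside an $\eps'$-regular pair of density less than $d$; edges incident to $W_0$ are retained. By construction every $W_i$ with $i \ge 1$ is independent in $G'$, every surviving pair $(W_i, W_j)$ with $1 \le i < j \le \ell$ is $\eps$-regular (since $\eps' \le \eps$) with density either $0$ or at least $d$, and for $v \in W_0$ no edges are removed so the degree bound is trivial there.

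The heart of the proof is the degree bound for $v \in W_i$ with $i \ge 1$. The three deletion types contribute: (a) at most $m - 1 \le \eps n$ edges within $W_i$; (b) at most (number of irregular pairs containing $W_i$) $\cdot m$ edges in irregular pairs; and (c) in each $\eps'$-regular low-density pair $(W_i, W_j)$ of density $\delta < d$, the standard consequence of $\eps'$-regularity is that all but $\eps'|W_i|$ vertices of $W_i$ have at most $(\delta + \eps') m \le (d + \eps') m$ neighbors in $W_j$, so a typical $v$ loses at most $(d + \eps') n$ edges from (c) summed over the $\le \ell$ such pairs.

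The main obstacle is to upgrade the ``typical $v$'' bounds of (b) and (c) into a bound valid for \emph{every} $v$: the original lemma only controls the \emph{total} number of irregular pairs and the \emph{total} number of exceptional vertex-pair incidences. I would handle this by an absorption step: move into $W_0$ every cluster participating in more than $\sqrt{\eps'}\,\ell$ irregular pairs, together with every vertex that is exceptional in more than $\sqrt{\eps'}\,\ell$ low-density regular pairs. Simple double-counts show there are only $O(\sqrt{\eps'}\,\ell)$ such clusters and $O(\sqrt{\eps'}\,n)$ such vertices in total, so $|W_0|$ grows at most to $\eps n$, equal cluster sizes can be restored by minor rebalancing, and for every remaining vertex $v$ the three losses sum to at most $\eps n + \sqrt{\eps'}\,n + (d + \eps')n + \sqrt{\eps'}\,n \le (d+\eps)n$, giving $\deg_{G'}(v) > \deg_G(v) - (d+\eps)n$ as required.
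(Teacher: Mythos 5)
The paper does not actually prove this lemma: it is quoted as the standard Degree Form of the Regularity Lemma (see Koml\'os--Simonovits~\cite{KS}), so there is no in-paper argument to compare against. Your derivation from the original Regularity Lemma follows the usual folklore route (apply the lemma with a much smaller parameter $\eps'$, delete edges inside clusters, inside irregular pairs and inside sparse pairs, then absorb the atypical vertices into the exceptional set), and the double-counting bounds for the number of bad clusters and bad vertices are correct. However, two points in the write-up do not work as stated.

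First, and most importantly, the degree condition is required for \emph{every} $v\in V$, including the vertices of the final exceptional cluster. In your scheme you first delete edges and only afterwards move into $W_0$ the clusters in many irregular pairs and the vertices that are exceptional for many sparse pairs. These absorbed vertices are exactly the ones that may have lost far more than $(d+\eps)n$ edges, so the claim ``for $v\in W_0$ no edges are removed'' is false for them and the bound $\deg_{G'}(v)>\deg_G(v)-(d+\eps)n$ can fail on the final $W_0$. The standard remedy is to decide the final exceptional set first (or equivalently to restore afterwards all edges incident to the enlarged $W_0$), so that $G'$ deletes edges only between and inside the \emph{final} non-exceptional clusters; this costs nothing for the other conditions, since they concern only the clusters $W_i$ with $i\ge 1$. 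Second, your final arithmetic does not close: with $m_0\ge\lceil 1/\eps\rceil$ the within-cluster loss is only bounded by $\eps n$, and then $\eps n+\sqrt{\eps'}n+(d+\eps')n+\sqrt{\eps'}n$ already exceeds $(d+\eps)n$. You need to request at least $1/\eps'$ clusters from the original lemma so that $m\le\eps' n$, after which the total loss $(d+2\eps'+2\sqrt{\eps'})n\le(d+\eps)n$ holds for $\eps'=\eps^2/100$. A minor further point worth a sentence in a full write-up: removing up to $\sqrt{\eps'}m$ vertices per cluster (and rebalancing) can lower the density of a surviving pair slightly below $d$, so one should delete pairs of density below, say, $d+2\eps'$ rather than $d$.
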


We call $W_0$ the \textit{exceptional cluster}, $W_1, \ldots, W_{\ell}$ are the non-exceptional clusters. 

\begin{definition}
	Apply \autoref{reglemma_deg} to the graph $G=(V,E)$ with parameters $\eps$ and $d.$ We construct the {\em reduced graph} $G_r$ as follows. Its vertices are the non-exceptional clusters, and two vertices are connected if the corresponding clusters form an $\eps$-regular pair with density at least $d$.
\end{definition}

\begin{claim}\label{reduced-degree}
	Let $G = (V, E)$ be a graph of order $n$ and $\delta(G) \geq cn$ for some $c > 0.$ Let $G_r$ be
	the reduced graph of $G'$ after applying \autoref{reglemma_deg} with parameters $\eps$ and $d$.
	Then $\delta(G_r) \geq (c - \theta)\ell$, where $\theta = 2\eps + d$.
\end{claim}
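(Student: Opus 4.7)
The plan is to do the standard double counting on a single vertex in an arbitrary non-exceptional cluster. Fix a cluster $W_i$ with $i\ge 1$ and pick any $v\in W_i$. By the degree form, in the subgraph $G'$ we have $deg_{G'}(v) > deg_G(v) - (d+\eps)n \ge (c - d - \eps)n$. I now want to locate these $G'$-neighbors among the various clusters.

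First I would partition the $G'$-neighbors of $v$ according to which cluster contains them. Neighbors can only lie in (i) the exceptional cluster $W_0$, (ii) the cluster $W_i$ itself, (iii) some non-exceptional cluster $W_j$ which is a neighbor of $W_i$ in $G_r$, or (iv) some non-exceptional cluster $W_j$ which is not a neighbor of $W_i$ in $G_r$. Contribution (i) is at most $|W_0|\le \eps n$; contribution (ii) is $0$ because $W_i$ is independent in $G'$; contribution (iv) is $0$ because in the degree form the density of a non-edge pair of $G_r$ is exactly $0$, so $G'$ has no edges between $W_i$ and such $W_j$. Therefore the neighbors accounted for by (iii) must number at least $(c - d - 2\eps)n = (c-\theta)n$.

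Next I bound contribution (iii) from above by $deg_{G_r}(W_i)\cdot m$, since each cluster $W_j$ adjacent to $W_i$ in $G_r$ contributes at most $m = |W_j|$ neighbors. Combining,
\[
deg_{G_r}(W_i)\cdot m \;\ge\; (c-\theta)n.
\]
Finally, because $\ell m + |W_0| = n$ we have $m\le n/\ell$, hence $n/m \ge \ell$, and dividing gives $deg_{G_r}(W_i)\ge (c-\theta)\ell$. Since $W_i$ was arbitrary, this yields $\delta(G_r)\ge (c-\theta)\ell$.

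There is no real obstacle here; the only point to be careful about is that the degree form guarantees every pair has density either $0$ or at least $d$ in $G'$, which is precisely what lets me discard contribution (iv) completely rather than having to absorb a $dn$-type error term. Everything else is bookkeeping with the inequalities $|W_0|\le \eps n$ and $m\le n/\ell$.
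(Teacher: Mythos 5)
Your proof is correct; the paper states this claim without proof, treating it as a standard consequence of the Degree Form, and your argument is exactly that standard argument. The key points are all in order: the $W_0$ contribution costs $\eps n$, the independence of $W_i$ and the ``density $0$ or $\ge d$'' dichotomy kill contributions (ii) and (iv), and $m\le n/\ell$ converts the count into the stated bound (with the harmless caveat that the inequality is trivial when $c-\theta<0$).
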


\begin{definition}
	We say that the pair of disjoint vertex sets $(A,B)$ is $(\eps,\delta)$-super-regular, if it is $\eps$-regular, and $deg(a)>\delta|B|$ for all $a\in A$, and $deg(b)>\delta|A|$ for all $b\in B$.
\end{definition}

\begin{remark}\label{adas}
	\label{regular-addition}
	If $(A,B)$ is an $\eps$-regular pair, and we add $\le 2\eps|A|$ new vertices to $A$ resulting $A'$, then 
the new pair $(A',B)$ is $\eps'$-regular, where $\eps' \le 2\sqrt{\eps}.$
\end{remark}

\begin{remark}
	\label{regularity-removal}
	If $(A,B)$ is an $\eps$-regular pair with density $d$, then for any $Y\subset B,~|Y|>\eps|B|$ we have
	\begin{equation}
	|\{x\in A: |N(x)\cap Y|\leq (d-\eps)|Y|\}|\leq \eps|A|.
	\end{equation}

\end{remark}

We need the following very important lemma:

\begin{theorem}[Blow-up Lemma, \cite{KSSz1}]

Given a graph $R$ of order $r$ and positive parameters
$\delta,\Delta$, there exists a positive $\eps=\eps(\delta,\Delta,r)$ such that the following holds: Let $n_1, n_2, \ldots, n_r$ be arbitrary positive integers and replace the vertices $v_1,v_2,\ldots,v_r$ of $R$ with pairwise disjoint sets $V_1, V_2, \ldots, V_r$ of sizes $n_1, n_2, \ldots, n_r$ (blowing up). We construct two graphs on the same vertex set $V = \cup V_i$. The first graph $F$ is obtained by replacing each edge $v_iv_j$ of $R$ with the complete bipartite graph between $V_i$ and $V_j$. A sparser graph $G$ is constructed by replacing each edge $v_iv_j$ arbitrarily with an $(\eps, \delta)$-super-regular pair between $V_i$ and $V_j$. If a graph $H$ with $\Delta(H)\leq \Delta$ is embeddable into $F$, then it is also embeddable into $G$.

\end{theorem}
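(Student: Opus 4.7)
The plan is to follow the classical randomized greedy algorithm framework of Koml\'os, S\'ark\"ozy and Szemer\'edi. Fix an embedding $\phi_F \colon V(H) \to V$ of $H$ into the blow-up $F$; this assigns to each vertex $h \in V(H)$ a target part $V_{\pi(h)}$. Our task is to produce an injection $\phi \colon V(H) \to V$ with $\phi(h) \in V_{\pi(h)}$ for all $h$, and $\phi(h)\phi(h') \in E(G)$ whenever $hh' \in E(H)$. I would build $\phi$ in two stages: a randomized greedy stage that embeds most vertices of $H$ one-by-one, followed by a deterministic matching stage that embeds a small reserved set of \emph{buffer} vertices via a Hall-type argument.

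Before running the algorithm, choose inside each $V_i$ a buffer $B_i \subset \pi^{-1}(i)$ of size roughly $\eps n_i$, picked so that $B:=\bigcup_i B_i$ is an independent set in $H$ and the $H$-neighborhood of each $b\in B$ is spread over several parts $V_j$. For each unembedded $h$ maintain a candidate set
\[
C(h) = \bigl\{\, v \in V_{\pi(h)} \setminus \phi(\text{embedded vertices}) : v \in N_G(\phi(h'))\text{ for each already embedded neighbor }h'\text{ of }h\,\bigr\}.
\]
In the greedy stage, process the non-buffer vertices in a carefully chosen order (roughly, a BFS order from the buffer outward so that neighbors are embedded close to each other in time), and at each step select $\phi(h)$ uniformly at random from a suitable large subset of $C(h)$.

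The core of the argument is to show that candidate sets do not shrink too fast. If $h'$ is a neighbor of $h$ that just got embedded and $(V_{\pi(h)},V_{\pi(h')})$ is the corresponding super-regular pair of density $d\ge \delta$, then $\eps$-regularity (Remark~\ref{regularity-removal}) guarantees that for all but $\eps n_{\pi(h')}$ choices of image, the new $C(h)$ has size at least $(d-\eps)|C(h)|$. Iterating this across the at most $\Delta$ already-embedded neighbors of $h$ shows that $|C(h)|$ stays at least $(\delta-\eps)^{\Delta} n_{\pi(h)}$. One must also track, for every unembedded $h$, that $C(h)$ remains \emph{typical} with respect to each regular pair $(V_{\pi(h)},V_j)$ in which it will later be restricted: this is ensured by reserving random choices and by a union bound over a short list of ``dangerous'' events whose probabilities are controlled by Azuma--Hoeffding-type concentration.

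In the matching stage, for each part $V_i$ we need a system of distinct representatives between the buffers $B_i$ and the remaining unused vertices of $V_i$, where $b\in B_i$ may be matched to $v$ iff $v\in C(b)$. Since $B$ is independent in $H$, the sets $C(b)$ for $b\in B_i$ were updated only through cross-edges to already embedded vertices in other parts; super-regularity of those pairs and the size lower bound above provide the ingredients to verify Hall's condition. The main obstacle, and the reason the original proof is delicate, is precisely this bookkeeping: with $\Omega(n)$ candidate sets to control throughout the greedy phase and many ways in which ``regularity'' of a candidate set with respect to future pairs can deteriorate, one must choose the right short list of invariants (size, degree, co-degree regularity) and prove they are preserved with high probability by a single random step. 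Tuning the buffer size, the embedding order, and the parameters so that $\eps = \eps(\delta,\Delta,r)$ comes out independent of the $n_i$ is the final technical point.
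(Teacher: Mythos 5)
The paper does not prove this statement at all: it is imported verbatim from Koml\'os--S\'ark\"ozy--Szemer\'edi \cite{KSSz1} and used as a black box, so there is no in-paper proof to compare against. Your outline does faithfully describe the architecture of the original proof in \cite{KSSz1} --- a randomized greedy phase embedding most of $H$, a reserved independent buffer in each part, and a final Hall/K\"onig matching phase --- so as a summary of the known argument it is accurate.

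As a \emph{proof}, however, it has genuine gaps, and they sit exactly where the Blow-up Lemma is hard. First, the claimed invariant $|C(h)|\ge(\delta-\eps)^{\Delta}n_{\pi(h)}$ cannot be maintained by simply iterating the regularity/degree bound: the greedy phase eventually embeds almost all of $V_{\pi(h)}$, so $C(h)$ is also depleted by the removal of used vertices, and near the end of the phase the unused part of $V_{\pi(h)}$ has size $o(n_{\pi(h)})$, at which point $\eps$-regularity gives no control (it only applies to sets of size $>\eps n_{\pi(h)}$). The actual proof must therefore bound, for each \emph{small} subset of a cluster, how many still-unembedded vertices have their candidate set trapped inside it, and reroute such vertices; you mention ``dangerous events'' and Azuma-type concentration but never state the invariants or the events, so this step is asserted rather than proved. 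Second, the verification of Hall's condition for the buffer-versus-leftover bipartite graph is the crux of the matching phase and is left entirely to ``super-regularity \ldots{} provide the ingredients''; in particular you need that every leftover vertex is a candidate for many buffer vertices (not just the converse), which requires its own argument. Third, you explicitly defer ``choosing the right short list of invariants \ldots{} and prove they are preserved,'' which is an admission that the core of the proof is missing. None of this makes your description wrong, but it is a roadmap of \cite{KSSz1}, not a self-contained proof; for the purposes of the present paper the correct move is simply to cite the result, which is what the authors do.
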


\begin{theorem}[Strengthening the Blow-up Lemma \cite{KSSz1}]\label{blow2}
	Assume that $n_i \leq 2n_j$ for every $1\leq i, j\leq r$. Then we can strengthen the lemma: Given $c > 0$ there are positive numbers $\eps=\eps(\delta,\Delta,r,c)$ and $\alpha = \alpha(\delta,\Delta,r, c)$ such that the Blow-up Lemma remains true if for every $i$ there are certain vertices $x$ to be embedded into $V_i$ whose images are a priori restricted to certain sets $T_x\subset V_i$	provided that each $T_x$ within a $V_i$ is of size at least $c|V_i|$, and the number of such restrictions within a $V_i$ is not more than $\alpha|V_ i|$.
	
\end{theorem}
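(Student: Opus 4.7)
The plan is to adapt the proof of the ordinary Blow-up Lemma to accommodate image restrictions while exploiting the additional hypothesis that the cluster sizes are comparable ($n_i \le 2n_j$). I would mirror the two-phase structure of the original Komlós--Sárközy--Szemerédi argument: a randomized greedy phase that places most vertices of $H$, followed by a König--Hall style \emph{repair phase} that finishes the embedding via a perfect matching. The restrictions $T_x$ will be respected throughout by carefully protecting them in the greedy phase and then invoking Hall's theorem.

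In the greedy phase, vertices of $H$ are embedded one by one in a random order. For an unrestricted vertex $v$ destined for $V_i$, its candidate set $C_v$ is the intersection, over already-embedded neighbors $u$, of the $V_i$-neighborhoods of the images of $u$. The restricted vertices are \emph{deferred} to the repair phase, but their target sets must be actively protected. The central invariant to maintain is that for every restricted vertex $x$ with target set $T_x \subset V_i$, the ``live'' target $C_x \cap T_x$ retains at least $c'|T_x|$ vertices throughout the greedy phase, for a suitably chosen $c' = c'(\delta,\Delta,c) > 0$. Because $|T_x| \ge c|V_i| \gg \eps|V_i|$, the $\eps$-regularity of every pair $(V_i, V_j)$ forces $T_x$ to behave like a generic subset: each newly embedded neighbor of $x$ shrinks $C_x \cap T_x$ only by approximately the factor $\delta - \eps$, just as it does $C_x$ in the original proof.

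For the repair phase, I would construct an auxiliary bipartite graph with the restricted vertices on one side and their surviving live candidates on the other, and verify Hall's condition. The bound $\alpha |V_i|$ caps the number of restricted vertices per cluster, so for any collection $S$ of restricted vertices inside one cluster we have $|S| \le \alpha |V_i|$, while by the maintained invariant each vertex in $S$ has at least $c'|T_x| \ge c'c|V_i|$ live candidates. Choosing $\alpha \ll c'c$ then gives Hall's condition by a standard expansion argument; combined with the unrestricted residual, the König--Hall/defect argument of the original proof closes the embedding. The comparability condition $n_i \le 2n_j$ is used exactly as in the original KSSz proof to keep the probabilistic estimates and union bounds across clusters uniform.

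The main obstacle is controlling correlations during the greedy phase: embedding a single unrestricted vertex $v$ may simultaneously shrink the live targets $C_x \cap T_x$ of many restricted vertices $x$, and these deviations are not independent. I would handle this with an Azuma-type martingale inequality on the exposure martingale of the random embedding, exactly as in the original proof, combined with a pigeonhole step that identifies at most $O(\sqrt{\eps})|V_i|$ restricted vertices per cluster whose invariant is violated; those exceptional vertices can be re-added to the pool handled by the repair phase (the total number of repair vertices stays below the Hall threshold). Choosing $\eps = \eps(\delta,\Delta,r,c)$ small in terms of $c$, and $\alpha = \alpha(\delta,\Delta,r,c)$ small in terms of both, the invariants close and the embedding succeeds.
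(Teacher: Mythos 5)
This theorem is not proved in the paper at all: it is quoted verbatim from Koml\'os, S\'ark\"ozy and Szemer\'edi \cite{KSSz1} and used as a black box, so there is no internal proof to compare your attempt against. Your sketch does reproduce the correct high-level architecture of the actual KSSz argument (randomized greedy embedding of most of $H$, a reserved buffer finished by a K\H{o}nig--Hall matching, Azuma-type concentration to control the candidate sets), so as a reconstruction of the literature it is pointed in the right direction. But it is a sketch of a long and delicate proof, not a proof, and it contains at least one genuine gap.

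The gap is in how you treat the restricted vertices. You defer them all to the repair phase and maintain the invariant that $C_x\cap T_x$ keeps at least $c'|T_x|$ vertices; but that invariant only controls the \emph{shrinkage} of the candidate set caused by embedding neighbors of $x$. It does not control \emph{occupation}: by the end of the greedy phase almost all of $V_i$ is already used as images of other vertices, so $C_x\cap T_x$ minus the occupied vertices can easily be empty even though $|C_x\cap T_x|\ge c'c|V_i|$. The standard fixes are either to embed the restricted vertices first (there are only $\alpha|V_i|$ of them and their target sets have size $\ge c|V_i|$, so this is cheap), or to explicitly reserve a positive fraction of each $T_x$ as forbidden for other images; your write-up does neither. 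Relatedly, your verification of Hall's condition only treats sets $S$ of restricted vertices of size at most $\alpha|V_i|$, whereas the repair phase must match \emph{all} leftover vertices (the buffer plus any exceptional vertices), so Hall's condition must be checked for sets up to the full size of the leftover, not just the small restricted part. As it stands the argument would fail at the matching step.
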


Another important tool for the proof is the following result by Fox and Sudakov~\cite{FS}.

\begin{theorem}\label{DRCh}
Let $H$ be a bipartite graph with $n$ vertices and maximum degree $\Delta \ge 1.$  Assume that $\rho \in (0, 1)$ is a real number. If 
$G$ is a graph with $N \ge 8\Delta \rho^{-\Delta}n$ vertices and at least $\rho \binom{N}{2}$
edges, then $H \subset G.$
\end{theorem}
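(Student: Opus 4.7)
The plan is to use the method of \emph{dependent random choice} (DRC) together with alteration and a greedy embedding. Write $H=A\cup B$ for the bipartition; we have $|A|,|B|\le n$. The aim is to produce a set $U\subseteq V(G)$ with $|U|\ge n$ such that every $S\subseteq U$ with $|S|\le \Delta$ satisfies $|N_G(S)|\ge n$. Once such $U$ is available, $H$ embeds greedily: first embed $B$ into $U$ arbitrarily; then for each $a\in A$ the at most $\Delta$ vertices of $B$ adjacent to $a$ have been mapped to some set $S\subseteq U$, so their common neighbourhood in $G$ has size $\ge n$, while fewer than $n$ vertices of $G$ are used so far, hence a free image for $a$ exists.

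To build $U$, sample a multiset $T\subseteq V(G)$ of size $s$ uniformly at random with repetition and set $U_0=\bigcap_{v\in T} N_G(v)$. By convexity and the density hypothesis $\bar d\ge \rho(N-1)$,
\[
\mathbb{E}[|U_0|]=\sum_{v\in V(G)}\Bigl(\tfrac{\deg(v)}{N}\Bigr)^{\!s}\ge N\Bigl(\tfrac{\bar d}{N}\Bigr)^{\!s}\ge N\rho^s(1-o(1)).
\]
Call a $\Delta$-subset $S\subseteq V(G)$ \emph{bad} if $|N_G(S)|<n$. Since $\Pr[S\subseteq U_0]=(|N_G(S)|/N)^s<(n/N)^s$ for bad $S$, the expected number of bad subsets contained in $U_0$ is at most $\binom{N}{\Delta}(n/N)^s$. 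Pick a realization of $T$ achieving both expectations; by alteration, deleting one vertex from each bad subset contained in $U_0$ yields the desired $U$, of size at least $\mathbb{E}[|U_0|]-\mathbb{E}[\#\text{bad in }U_0]$ and containing no bad $\Delta$-subset.

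The heart of the argument is then the parameter choice: find $s$ so that the hypothesis $N\ge 8\Delta\rho^{-\Delta}n$ forces both $\mathbb{E}[|U_0|]\ge 2n$ and $\mathbb{E}[\#\text{bad}]\le n$. With $s=\Delta$ the first inequality becomes $N\rho^\Delta\ge 8\Delta n$, which is immediate from the hypothesis; but the naive union bound gives $\binom{N}{\Delta}(n/N)^\Delta\le (en/\Delta)^\Delta$, which is too large once $\Delta\ge 2$. I expect this tightening of the bound on bad subsets to be the main obstacle. The natural way around it is to exploit that the \emph{average} $\Delta$-codegree in $G$ is already at least $N\rho^\Delta\ge 8\Delta n$, so bad subsets (with $|N_G(S)|<n$) are far more atypical than $\binom{N}{\Delta}$. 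A Jensen/second-moment estimate on $|N_G(S)|$, combined with the probability weighting $(|N_G(S)|/N)^s$ that enters the DRC expectation (so that bad $S$ are penalised twice: they are rare and unlikely to survive in $U_0$), should close the gap and make a hypothesis linear in $n$ suffice, precisely matching $N\ge 8\Delta\rho^{-\Delta}n$.
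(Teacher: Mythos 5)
First, note that the paper does not prove this statement at all: Theorem~\ref{DRCh} is imported verbatim from Fox and Sudakov~\cite{FS} and used as a black box, so there is no internal proof to compare against. Your reduction is the right one --- it suffices to find $U\subseteq V(G)$ with $|U|\ge n$ such that every $S\subseteq U$ with $|S|\le\Delta$ has $|N_G(S)|\ge n$, after which the greedy embedding of $A$ over an arbitrary injection of $B$ into $U$ is correct --- and your set-up of dependent random choice with alteration is the classical route to such a $U$. The gap is the one you yourself flag, and it is not a routine tightening: within the plain alteration framework \emph{no} choice of $s$ can work. Writing $N=Cn$ with $C=8\Delta\rho^{-\Delta}$, the requirement $\mathbb{E}[|U_0|]\ge N\rho^s\ge 2n$ forces $s\le \log(C/2)/\log(1/\rho)$, a constant, while making $\binom{N}{\Delta}(n/N)^s\ge (N/\Delta)^\Delta C^{-s}$ drop below $n$ forces $s\ge (\Delta-1)\log n/\log C - O_\Delta(1)$, which tends to infinity. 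So for $\Delta\ge 2$ and large $n$ the two demands are incompatible, and the alteration method inherently loses a polynomial factor in $n$; the linear dependence on $n$ is exactly the point of the theorem and of Fox--Sudakov's ``twist'' on dependent random choice.

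Your proposed repair --- that the density hypothesis makes bad $\Delta$-sets ``far more atypical than $\binom{N}{\Delta}$'' via a Jensen or second-moment estimate on codegrees --- is not available: a large \emph{average} codegree gives no upper bound on the number of sets with \emph{small} codegree. For instance, a clique on $\rho^{1/2}N$ vertices together with $(1-\rho^{1/2})N$ isolated vertices has about $\rho\binom{N}{2}$ edges, yet all but a $\rho^{\Delta/2}$-fraction of the $\Delta$-subsets of $V(G)$ have empty common neighbourhood. (In that particular example the weighted count $\sum_{S\ \mathrm{bad}}(|N_G(S)|/N)^s$ happens to vanish, but this only underlines that the estimate you need is a genuine structural statement requiring proof, not a consequence of convexity.) The actual argument of~\cite{FS} modifies the random choice itself rather than the union bound, and that modification is the essential content of the theorem; as it stands your proposal proves the statement only for $\Delta=1$.
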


We are going to apply Theorem~\ref{DRCh} in the special case $\rho=1/2.$

Finally one more notation: we will write $a\ll b$ for two positive numbers $a$ and $b$ if $a$ is sufficiently smaller than $b.$

\subsection{Proof of Theorem~\ref{fo2}}

Below we present the embedding algorithm as an itemized list.
As we indicated above, certain parts of the proof of Theorem~\ref{fo2} is very similar to the proof of Theorem~\ref{wellsep2}
from~\cite{wellsep}. Hence, whenever it is possible we refer
to the claims, lemmas of~\cite{wellsep}.

\paragraph{Step 1} Assume that $H\in \mathcal{H}_{r,t}$ has $n$ vertices. Denote the separator set of $H$ by $S,$ here $|S|\le \gamma n.$ Observe that we can apply the deep result of Fox and Sudakov, Theorem~\ref{DRCh} above, for finding a copy of $H[S]$ in $G,$ since $\delta(G)>n/2.$ Let us call the uncovered part of $G$ by $\widetilde{G}$ after embedding $H[S].$
Note that $\delta(\widetilde{G})\ge (1/2+2\gamma^{1/3})n$ and $|\widetilde{G}|\ge (1-\gamma)n.$

\paragraph{Step 2} Next we apply the Degree form of the Regularity lemma for $\widetilde{G}$ with parameters $\eps$ and 
$d$ such that $\eps \ll d=\sqrt{\gamma}.$ As a result, we have $\ell  + 1$ clusters, $W_0, W_1, \ldots, W_\ell$. The exceptional cluster $W_0$ has at most $\eps n$ vertices, while all other clusters have the same size $m$. We form the reduced graph $\widetilde{G}_r$, for which we have $\delta(\widetilde{G}_r) \geq (1/2+\gamma')\ell$, where $\gamma' = 2\gamma^{1/3} - \sqrt{\gamma} - \eps > \gamma^{1/3}.$

\paragraph{Step 3} Since $\delta(\widetilde{G}_r)\ge \ell/2,$ we have a perfect (or almost perfect) matching $M$ in $\widetilde{G}_r$: 
at most one cluster will remain uncovered by $M,$ and only if $\ell$ is odd. If there is such a cluster, we add it into $W_0$.

\paragraph{Step 4} Next we transform the edges of $M$ into super-regular pairs. Given a $\delta$ with $\eps\ll \delta \ll d$ we have to remove at most $\eps m$ vertices from a cluster to make a regular pair $(\eps,\delta)$-super-regular. We discard the same number of vertices, $\eps m,$ from every non-exceptional cluster, and place the discarded vertices into $W_0$. 
Note that pairs become $\eps'$-regular with $\eps'<2\eps$, and the sizes of the clusters are $m'=m-\eps m$. For sake of simplicity we will still use $\eps$ and $m$ in the rest of the paper.

\paragraph{Step 5} The next step is to distribute the vertices of the exceptional cluster $W_0$ among the non-exceptional clusters while maintaining the super-regularity of the edges of $M.$ We also require that the resulting clusters have about the same size (i.e. we need, that $\big||W_i|-|W_j|\big|\le 3\eps m$ for all $1\leq i,j\leq \ell$). For this, we use the same procedure described in \cite{wellsep}, the reader may consult with that paper for the details, here we only sketch the argument. Let us denote the neighbor of a cluster $W_i$ in the matching $M$ by $N_M(W_i)$.


For the distribution of $W_0$ we define an auxiliary bipartite graph $J$ having vertex classes $W_0$ and $V(\widetilde{G}_r).$  Here $vW_i\in E(J)$ if $deg_G(v,N_M(W_i))\geq \delta m$.

Let $\gamma'' = 3(\gamma^{1/3} -2(\eps +d))$. We need the following lemma, which is a special case of Lemma 10 in~\cite{wellsep}. 

\begin{lemma}\label{auxiliary-lemma}
	$deg_{J}(v) \geq (\frac{1}{2} + \gamma'')\ell$ for every $v\in W_0$.
\end{lemma}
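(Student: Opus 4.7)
The plan is to reduce the defining condition of $J$ to a straightforward count of cluster-degrees, then bound the number of ``bad'' clusters by a pigeonhole-type estimate. Fix $v\in W_0$. Since $M$ is a perfect matching of the non-exceptional clusters, $N_M$ is a bijection on $\{W_1,\dots,W_\ell\}$, so
\[
\deg_J(v) \;=\; \bigl|\{\, j\in[\ell] : \deg_{\widetilde{G}}(v,W_j) \ge \delta m \,\}\bigr|.
\]
Thus it suffices to upper bound $|B|$, where $B := \{\, j : \deg_{\widetilde{G}}(v,W_j) < \delta m\,\}$, by $(1/2-\gamma'')\ell$.

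For the degree of $v$ itself, I would use the sharper bound $\deg_{\widetilde{G}}(v) \ge (1/2 + 3\gamma^{1/3} - \gamma)n$, which follows from the original hypothesis on $G$ together with $|S|\le \gamma n$ — the weaker estimate $(1/2+2\gamma^{1/3})n$ stated in Step~1 would not suffice. After the Step~4 redistribution one has $|W_0|\le 2\eps n$, so
\[
\sum_{j=1}^{\ell} \deg_{\widetilde{G}}(v,W_j) \;\ge\; (1/2 + 3\gamma^{1/3} - \gamma - 2\eps)n.
\]
On the other hand, bounding each summand by $m$ outside $B$ and by $\delta m$ on $B$ gives
\[
\sum_{j=1}^{\ell} \deg_{\widetilde{G}}(v,W_j) \;\le\; \ell m - (1-\delta)|B|m.
\]
Combining the two inequalities with the trivial bounds $\ell m \le n$ and $n/m \le (1+O(\eps))\ell$ and rearranging yields $|B| \le (1/2 - 3\gamma^{1/3} + O(\gamma + \delta + \eps))\ell$. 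Subtracting from $\ell$ gives $\deg_J(v) \ge (1/2 + 3\gamma^{1/3} - O(\gamma + \delta + \eps))\ell$, which by the parameter hierarchy $\eps\ll\delta\ll d=\sqrt{\gamma}\ll\gamma^{1/3}$ is at least $\bigl(1/2 + 3(\gamma^{1/3}-2(\eps+d))\bigr)\ell = (1/2+\gamma'')\ell$, as required.

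The main obstacle is not a structural idea but the arithmetic bookkeeping: the whole $3\gamma^{1/3}$-slack in the hypothesis is essentially consumed by the three small losses (separator removal $\gamma n$, exceptional-cluster expansion $2\eps n$, and the $n/m \leftrightarrow \ell$ conversion), so one must fix the parameter hierarchy at the outset in such a way that all these losses lie safely below the $6(\eps+d)\ell$ slack in $\gamma''$. Since the lemma is stated as a special case of Lemma~10 of~\cite{wellsep}, the complete argument closely mirrors the bookkeeping there; the only adjustment is tracing our specific constants through the computation to match the form $\gamma''=3(\gamma^{1/3}-2(\eps+d))$.
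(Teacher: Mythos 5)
Your argument is correct. Note that the paper itself gives no proof of this lemma---it is simply quoted as a special case of Lemma~10 of~\cite{wellsep}---so your double-counting derivation is a self-contained substitute for that citation rather than a variant of an in-paper argument. The counting closes as you describe: starting from $\deg(v)\ge(1/2+3\gamma^{1/3}-\gamma)n$, the losses of order $\gamma$ (separator removal), $O(\eps)$ (the enlarged exceptional set), $\delta$ (the threshold in the definition of $J$) and $O(\eps+\gamma)$ (converting $n/m$ into $\ell$) are all dominated by the $6(\eps+d)$ slack built into $\gamma''$, since $\gamma=d^2\ll d$ and $\delta\ll d$. You are also right to flag that the weaker bound $\delta(\widetilde{G})\ge(1/2+2\gamma^{1/3})n$ recorded in Step~1 would not suffice, because $\gamma''=3\gamma^{1/3}-6(\eps+d)>2\gamma^{1/3}$; this is a genuine, if small, point that a reader tracing only the displayed constants would miss. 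One refinement: the degrees entering the definition of $J$ should be measured in the regularity subgraph $G'$ rather than in $\widetilde{G}$, since what one needs is that $v$ have $\ge\delta m$ neighbours of $N_M(W_i)$ inside the regular pair in order to preserve super-regularity; this costs an additional $(d+\eps)n$ via the degree form of the Regularity lemma, a loss that is again absorbed by---and is presumably the origin of---the $6(\eps+d)$ term in $\gamma''$.
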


Using \autoref{auxiliary-lemma} we can distribute the vertices $W_0$ among the clusters one-by-one, essentially greedily. 
Assume that we have already found non-exceptional clusters for the first $s$ vertices of $W_0.$ Then for every $1\le i\le \ell$
we let $P_s(W_i)$ to be the number of $W_0$-vertices put into $W_i.$ 

When we look for a non-exceptional cluster for the $(s+1)$st vertex $v\in W_0,$ we take the smallest $P_s(W_i)$ value for every $W_i\in N_J(v)$ (we break ties arbitrarily). It is easy to see by the large minimum degree of $J$ that no non-exceptional
cluster will get more than $2|W_0|/\ell$ new vertices from $W_0.$ Since $|W_0|\le \eps n,$ and $n/\ell \approx m,$ we get that
$\big||W_i|-|W_j|\big|<3\eps m.$ Using Remark~\ref{adas} super-regularity of edges of $M$ is maintained, although instead of $\eps$ we have at most $2\sqrt{2\sqrt{\eps}}<3\sqrt[4]{\eps}$ here.

\paragraph{Step 6} Next we assign the components of $H-S$ to the non-exceptional clusters. We do it using a random procedure as in ~\cite{wellsep} (randomness is not necessary here, but a simple choice), i.e., the components are assigned randomly to edges of $M.$ 

The algorithm is as follows. Consider a component $A$ of $H$, it is a tree, so it is bipartite. 
Denote its vertex classes by $A_1$ and $A_2.$ Pick an edge $Q_1Q_2$ of $M$ randomly, uniformly. Let $\pi$ be a uniform random permutation on $\{1,2\}$, and assign the vertices of $A_i$ to $Q_{\pi(i)}$ for $i=1,2$. The following lemma holds (for a proof see~\cite{wellsep}).

\begin{lemma}
With positive probability the mapping algorithm assigns $n/\ell \pm \eps m/\ell$ vertices of $H$ to every edge of $\widetilde{G}_r$.
\end{lemma}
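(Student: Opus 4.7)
The plan is to prove the lemma by a standard concentration plus union bound argument. Every component of $H-S$ is a tree on at most $t + D - 1$ vertices, hence bounded by a constant $C=C(\gamma,t)$ independent of $n$. The matching $M$ has $\lfloor \ell/2 \rfloor$ edges, and the algorithm assigns each component independently and uniformly to one of these edges (the subsequent random permutation of the two color classes of the tree does not affect the count per edge).

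Fix an edge $e\in M$ and let $X_e$ denote the number of vertices of $H-S$ sent to $e$. Write $X_e = \sum_A Y_A$, where $A$ ranges over the components of $H-S$ and $Y_A\in \{0, |A|\}$ records whether $A$ was assigned to $e$. The $Y_A$ are independent, each bounded by $C$, and $\mathbb{E}[X_e] = (n-|S|)\cdot 2/\ell \approx 2n/\ell$. Hoeffding's inequality gives
$$\Pr\Bigl[\,|X_e - \mathbb{E}[X_e]| > \tfrac{\eps m}{\ell}\Bigr] \;\le\; 2\exp\!\Bigl(-\frac{2(\eps m/\ell)^2}{\sum_A |A|^2}\Bigr) \;\le\; 2\exp\!\Bigl(-\Omega_{\gamma,\eps}\bigl(n/\ell^{4}\bigr)\Bigr),$$
using $\sum_A |A|^2 \le C\sum_A |A| \le Cn$ and $m \approx n/\ell$. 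Since the Regularity Lemma guarantees $\ell\le M(\eps)$ independent of $n$, this tail probability is exponentially small in $n$ for $n\ge n_0(\eps,\gamma,t)$.

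A union bound over the at most $\ell$ edges of $M$ gives failure probability at most $\ell \exp(-\Omega(n))\ll 1$, so the good event holds with positive probability (in fact, with probability tending to $1$). The main obstacle is only a bit of parameter bookkeeping: one must check that the requested deviation $\eps m/\ell \asymp \eps n/\ell^{2}$ exceeds the Hoeffding standard deviation $O(\sqrt{n})$, which is automatic once $n$ is large enough in terms of $\eps$, $\gamma$ and $t$ (recall that $\ell$ is a constant in $n$). The same calculation applied cluster-by-cluster instead of edge-by-edge shows simultaneously that each cluster $Q_i$ receives the expected number of vertices up to an error of the same order, which is exactly the input we shall feed into the Blow-up Lemma in the next step of the embedding.
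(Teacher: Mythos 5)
Your proof is correct and is essentially the argument the paper intends: the paper gives no proof here, deferring to \cite{wellsep}, where the same Hoeffding/Chernoff concentration over the independent, uniformly assigned bounded-size components plus a union bound over the constantly many edges of $M$ is used (the paper invokes exactly this kind of Chernoff estimate again in Step~7). The only point worth flagging is that your own computation gives $\mathbb{E}[X_e]\approx 2n/\ell$ per edge of $M$, so the lemma's stated value $n/\ell$ should be read per cluster (or as $2n/\ell$ per edge); your closing cluster-by-cluster remark already covers the version that is actually fed into the Blow-up Lemma.
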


 For $x\in V(H)-S$ we let $C(x)$
denote the cluster to which $x$ is assigned to.
The algorithm immediately implies that whenever $x, y\in V(H)-S$ are adjacent, then $C(x)C(y)$ is an edge of $M.$ 

Recall that in Step 1 $H[S]$ was embedded using Theorem~\ref{DRCh}, before even applying the Regularity Lemma.
So the vertices of $S$ were not assigned to clusters but directly mapped onto vertices of $G.$ It is clear that adjacent vertices
of $S$ were mapped onto adjacent vertices of $G.$

However, for applying  Theorem~\ref{blow2}, the Blow-up lemma with restrictions, we need that if $x\in S$ is mapped onto $v\in V(G),$ $y\in V(H)-S$ is assigned to
$C(y),$ then $v$ must have many neighbors in $C(y).$ 
 
In such a case we will assign $y$ to another cluster (again, we use a procedure from~\cite{wellsep} with a few minor modifications) as follows.
Let $L$ denote those clusters in which $v$ has at least $3\gamma^{2/3} m$ neighbors. Simple counting argument shows that 
$|L|\ge \ell/2.$ Let $C(y)W_i$ denote the edge of
$M$ to which the component of $y$ was assigned. Then we locate a cluster $W_j\in L$ such that $W_j$ is adjacent to $W_i$
in $\widetilde{G}_r.$ Since $L$ has more than $\ell/2$ clusters, using the minimum degree of $\widetilde{G}_r$ 
we have at least $2\gamma^{1/3}\ell$ choices for $W_j$ in $L.$ 

Then we change the assignment of $y,$ we let $C(y)=W_j.$   
This way $v$ will have many neighbors in $C(y)$ and $C(y)$ will be adjacent to the cluster of the neighbors of $y$ in its component. 
Observe that if we locate the $W_j$ clusters as evenly as possible then we can achieve that at most  
$|S|/(2\gamma^{1/3}\ell) \le \gamma n/(2\gamma^{1/3}\ell)\le \gamma^{2/3}m$
vertices are reassigned to a particular cluster, using that $|S|\le \gamma n$ and the number of choices for a new cluster is always at least $2\gamma^{1/3}\ell.$ 
Of course, changing the assignment of vertices of $H$ does not affect regularity or super-regularity between clusters of $\widetilde{G}_r.$
  
\paragraph{Step 7} In this step we will achieve that the number of vertices assigned to a cluster is the same as the size of that cluster. As before, we will reassign some of the vertices. While in the previous step we changed the assignment of first vertices of components, this time we will work with the leaves of the components. 

Say, that $W_s$ has more vertices assigned to it than its size $|W_s|.$ Then there must be a cluster $W_i$ to which we assigned
less than $|W_i|$ vertices of $H.$ Let $W_j$ denote the neighbor of $W_s$ in the matching $M.$ If $W_jW_i$ is an edge
in $\widetilde{G}_r$ then we pick a vertex $x$ such that $C(x)=W_j$ and $deg_H(x)=D-1.$ 
This is possible since the number of last vertices (these have degree $D-1$) is at least $(1-\gamma)n/(t+D-1),$ and by Chernoff's inequality with probability $>1-1/n^2$ we assigned more than $(1-\gamma) m/(2 (t+D-1))$ to every cluster.
This implies that the number of leaves that are assigned to a particular cluster is at least
$(D-1)(1-\gamma) m/(2 (t+D-1)),$ which is much larger than $\gamma^{2/3}m.$
We reassign some of the {\it leaves} that are adjacent to $x,$ the necessary number will be assigned to $W_i.$
 
If $W_jW_i$ is not an edge, then by the minimum degree of $\widetilde{G}_r$ there must exist at least $\gamma^{1/3}\ell$ clusters $W_p$ and $W_q$ such that $W_qW_i$ and $W_jW_p$ are edges in 
$\widetilde{G}_r,$ and $W_pW_q$ is an edge in $M.$ Then the above procedure is done in two steps: first we reassign 
some vertices (always leaves) from $W_s$ to $W_p$ and then the same number of leaves from $W_p$ to $W_i.$ 
Note that the same computation works as above:
at most $\gamma^{2/3}m$ vertices are reassigned at every cluster. 

Let us remark again that reassigning the vertices of $H$ during this step does not affect regularity and super-regularity of pairs in $\widetilde{G}_r,$ unlike
when vertices of $G$ were distributed in Step 5. 

\paragraph{Step 8} Recall that the density of the regular pairs is at least $\sqrt{\gamma},$ and at most $\gamma^{2/3}m$ vertices are reassigned at every cluster. It is crucial here to look at the so called a priori restrictions for some of the vertices of $H$ before applying the Blow-up Lemma. These are the first vertices of components (since their neighbors belong to $S$ and are already mapped), and the leaves that were reassigned
during Step 7. As the number of these vertices is very small compared to $m$ in every cluster, we are able to apply the Blow-up lemma.  This finishes the proof of Theorem~\ref{fo2}.

\end{document}